\documentclass[a4paper,reqno]{amsart}
\usepackage{}

\textheight 220mm
\textwidth 150mm
\hoffset -16mm
\usepackage{CJK}
\usepackage{mathrsfs}
\usepackage{amssymb,amsmath}
\usepackage{bbding}
\usepackage{amssymb}
\usepackage{bbm}
\usepackage{color}
\usepackage{pgf,pgfarrows,pgfnodes,pgfautomata,pgfheaps}
  
\usepackage{amstext}
\usepackage{amsmath}
\usepackage{amscd}
\usepackage{array}
\usepackage{amsthm}
\usepackage{amsfonts}
\usepackage{enumerate}
\usepackage{graphicx}
\usepackage{graphics}
\usepackage{latexsym}
\usepackage{mathrsfs}
\usepackage{mathtools}
\usepackage[all]{xy}
\usepackage[all]{xy}
\xyoption{all}

\usepackage{pstricks}
\usepackage{lscape}
\usepackage{comment}

\setlength{\textwidth}{154mm} \setlength{\textheight}{218mm}
\setlength{\topmargin}{-6mm} \setlength{\oddsidemargin}{8mm}

\newtheorem{theorem}{Theorem}[section]

\newtheorem{corollary}[theorem]{Corollary}
\newtheorem{lemma}[theorem]{Lemma}
\newtheorem{proposition}[theorem]{Proposition}
\newtheorem{definition-proposition}[theorem]{Definition-Proposition}

\newtheorem{question}[theorem]{Question}

\theoremstyle{definition}
\newtheorem{definition}[theorem]{Definition}

\newtheorem{remark}[theorem]{Remark}
\newtheorem{example}[theorem]{Example}

\newcommand{\Ext}{\operatorname{Ext}\nolimits}

\newcommand{\Hom}{\operatorname{Hom}\nolimits}

\newcommand{\Tr}{\operatorname{Tr}\nolimits}
\renewcommand{\mod}{\mathsf{mod}\hspace{.01in}}

\newcommand{\add}{\mathsf{add}\hspace{.01in}}
\newcommand{\Fac}{\mathsf{Fac}\hspace{.01in}}
\newcommand{\Sub}{\mathsf{Sub}\hspace{.01in}}

\newcommand{\rad}{\operatorname{rad}\nolimits}
\newcommand{\soc}{\operatorname{soc}\nolimits}
\newcommand{\RHom}{\mathbf{R}\strut\kern-.2em\operatorname{Hom}\nolimits}

\numberwithin{equation}{section}

\usepackage{paralist}

\hoffset-9mm
\def\Im{\mathop{\rm Im}\nolimits}

\def\Tr{\mathop{\rm Tr}\nolimits}
\def\mod{\mathop{\rm mod}\nolimits}
\def\rad{\mathop{\rm rad}\nolimits}
\def\Hom{\mathop{\rm Hom}\nolimits}
\def\Ext{\mathop{\rm Ext}\nolimits}

\def\id{\mathop{\rm id}\nolimits}
\def\pd{\mathop{\rm pd}\nolimits}

\begin{document}
\title{A bijection theorem for Gorenstein projective $\tau$-tilting modules}
\thanks{MSC2020: 16G10, 18G25}
\thanks{Keywords:  $\tau$-tilting module, Gorenstein projective, Gorenstein injective, $\tau^{-1}$-tilting module}
 \thanks{$*$ is the corresponding author. Z. Xie is supported by NSFC (No.12101320) and the Science and Technology Development Fund of Nanjing Medical University NMUB2020024. X. Zhang is supported by NSFC (No.12171207) and the Project Funded by the Priority
Academic Program Development of Jiangsu Higher Education Institutions and the Starting Project of Jiangsu Normal University.}

\author{Zongzhen Xie}
\address{Z. Xie: Department of Mathematics and Computer Science, School of Biomedical Engineering and Informatics, Nanjing Medical University,
Nanjing, 210029, P. R. China}
\email{xiezongzhen3@163.com}
\author{Xiaojin Zhang$^*$}
\address{X. Zhang: School of Mathematics and Statistics, Jiangsu Normal University,
Xuzhou, 221116, P. R. China}
\email{xjzhang@jsnu.edu.cn; xjzhangmaths@163.com}

\maketitle
\begin{abstract}
We introduce the notions of Gorenstein projective $\tau$-rigid modules, Gorenstein projective support $\tau$-tilting modules and  Gorenstein torsion pairs and give a Gorenstein analog to Adachi-Iyama-Reiten's bijection theorem on support $\tau$-tilting modules. More precisely, for an algebra $\Lambda$, We prove that there is a bijection between the set of Gorenstein projective support $\tau$-tilting modules and the set of functorially finite Gorenstein projective torsion classes. As an application, we introduce the notion of CM-$\tau$-tilting finite algebras and show that $\Lambda$ is CM-$\tau$-tilting finite if and only if $\Lambda^{\rm {op}}$ is CM-$\tau$-tilting finite. Moreover, we show that the Bongartz completion of a Gorenstein projective $\tau$-rigid module need not be a Gorenstein projective $\tau$-tilting module.
\end{abstract}

\section{Introduction}

 Gorenstein projective modules are essential in Gorenstein homological algebra, commutative algebra and the representation theory of algebras. They can be back to Auslander and Bridger's modules of G-dimension zero in 1969. In 1995, Enochs and Jenda \cite{EJ1,EJ2} defined the Gorenstein projective modules over an arbitrary ring as a generalization of modules of G-dimension $0$. From then on, Gorenstein homological algebra became fruitful. For the recent work on Gorenstein projective modules over finite dimensional algebras, we refer to \cite{CSZ,HuLXZ,K,RZ1,RZ2,RZ3}.

On the other hand, $\tau$-tilting theory was introduced by Adachi, Iyama and Reiten in 2014. It was shown \cite{AIR} that $\tau$-tilting theory was closely related to silting theory \cite{AiI} and cluster tilting theory \cite{IY}. In $\tau$-tilting theory, support $\tau$-tilting modules are important objects since the mutations of support $\tau$-tilting modules always exist. For the recent development on support $\tau$-tilting modules over special algebras, we refer to \cite{AiH,IZ1,IZ2,KK,S,XZZ,Zi}.

It is well-known that Gorenstein projective tilting modules are trivial modules. But there are a lot of Gorenstein projective $\tau$-tilting modules since all $\tau$-tilting modules over a self-injective algebra are Gorenstein projective ! In the present paper, we combine Gorenstein projective modules with $\tau$-tilting modules to build a new theory which is an intersection between Gorenstein homological algebra and $\tau$-tilting theory. We should remark that it was shown by Li and the second author \cite{LZ2} that Gorenstein projective support $\tau$-tilting modules do exist for algebras of self-injective dimension $n\geq0$ for any non-negative integer $n$. Recently, Gorenstein projetive $\tau$-tilting modules are used to determine gentle algebra of finite type by Liu and Zhang \cite{LiuZ}.

 Recall that a module $M\in \mod\Lambda$ is called $\tau$-rigid if $\Hom_{\Lambda}(M,\tau M)=0$, where $\tau$ is the Auslander-Reiten translation functor. Denote by $|M|$ the number of non-isomorphic indecomposable direct summands of $M$. $M$ is called a $\tau$-tilting module if it is $\tau$-rigid and $|M|=|\Lambda|$. Moreover, $M$ is support $\tau$-tilting if $M$ is a $\tau$-tilting module over $\Lambda/(e)$, where $e$ is an idempotent.
For a $\tau$-rigid pair $(M,P)$ in $\mod\Lambda$, we call $(M,P)$ Gorenstein projective if $M$ is Gorenstein projective. Dually, one can define Gorenstein injective $\tau^{-1}$-rigid pairs. The following result which is analog to the theorem \cite[Theorem 2.14]{AIR} gives a bijection between Gorenstein projective $\tau$-rigid pairs and Gorenstein injective $\tau^{-1}$-rigid pairs.
\begin{theorem}\label{1.1}(Theorem \ref{3.4})
Let $\Lambda$ be a finite dimensional algebra. There is a bijection between the following sets.
 \begin{enumerate}[\rm(1)]
\item The set of Gorenstein projective $\tau$-rigid pair in $\mod \Lambda$.
\item The set of Gorenstein projective $\tau$-rigid pair in $\mod \Lambda$$ ^{\mathrm{op}}$.
\item The set of Gorenstein injective $\tau^{-1}$-rigid pair in $\mod \Lambda$.
\item The set of Gorenstein injective $\tau^{-1}$-rigid pair in $\mod \Lambda$$ ^{\mathrm{op}}$.
\end{enumerate}
\end{theorem}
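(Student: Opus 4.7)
The plan is to realize the fourfold bijection of Theorem~\ref{1.1} as a combination of two classical bijections due to Adachi--Iyama--Reiten \cite[Theorem 2.14]{AIR}: the Auslander--Bridger transpose $\Tr$, which relates $\tau$-rigid pairs over $\Lambda$ to those over $\Lambda^{\op}$, together with the $k$-duality $D=\Hom_k(-,k)$, which swaps $\tau$ with $\tau^{-1}$ while simultaneously interchanging projectives and injectives. What needs new work is to check that each of these classical bijections restricts correctly to the Gorenstein projective (resp.\ Gorenstein injective) setting.

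For the bijection (1)$\leftrightarrow$(2), I would take a basic $\tau$-rigid pair $(M,P)$ in $\mod\Lambda$, decompose $M=M_0\oplus M_1$ with $M_1$ the maximal projective summand, and apply the AIR correspondence induced by the duality $(-)^{*}=\Hom_\Lambda(-,\Lambda)$ on 2-term silting complexes, sending $(M,P)$ to a $\tau$-rigid pair in $\mod\Lambda^{\op}$ whose non-projective part has $\Tr M_0$ as its essential component and whose projective part involves $M_1^{*}$. To check that Gorenstein projectivity is preserved, I would invoke the standard fact from Gorenstein homological algebra that $(-)^{*}$ carries a complete projective resolution of a totally reflexive module $M_0$ (without projective summand) over $\Lambda$ to one of $\Tr M_0$ over $\Lambda^{\op}$; combined with the fact that projective modules are automatically Gorenstein projective, this gives the desired restriction. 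The inverse bijection is the same construction run from $\mod\Lambda^{\op}$.

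For (1)$\leftrightarrow$(4), I would apply $D$ to $(M,P)$. Using the standard identity $\tau^{-1}_{\Lambda^{\op}}\circ D\cong D\circ\tau_\Lambda$ on stable categories,
\begin{align*}
\Hom_{\Lambda^{\op}}(\tau^{-1}_{\Lambda^{\op}}DM,\,DM)&\cong\Hom_\Lambda(M,\tau_\Lambda M)=0,\\
\Hom_{\Lambda^{\op}}(DM,\,DP)&\cong\Hom_\Lambda(P,M)=0,
\end{align*}
and since $DP$ is injective, $(DM,DP)$ is a $\tau^{-1}$-rigid pair over $\Lambda^{\op}$. Because $D$ is exact and interchanges projectives with injectives, it sends a complete projective resolution of a Gorenstein projective module $M$ over $\Lambda$ to a complete injective resolution of $DM$ over $\Lambda^{\op}$; hence $M$ is Gorenstein projective iff $DM$ is Gorenstein injective. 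The same $D$-argument applied to $\Lambda^{\op}$ yields (2)$\leftrightarrow$(3), and (3)$\leftrightarrow$(4) follows either by composition or by rerunning the transpose argument in the $\tau^{-1}$-setting; the four maps are visibly compatible and form a commuting square.

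The main obstacle is the careful bookkeeping in the first step: the AIR bijection shifts projective direct summands of the $M$-component of a pair into the $P$-component on the opposite side, and vice versa. One must verify that this reshuffling preserves Gorenstein projectivity of the resulting pair. This turns out to be clean because projective modules are trivially Gorenstein projective and because the transpose restricts to a duality between the subcategories of Gorenstein projective modules without projective summands over $\Lambda$ and over $\Lambda^{\op}$. The remaining compatibilities are formal consequences of functoriality of $\Tr$ and $D$ and of the classical interplay between the Nakayama functor, $(-)^{*}$, and $D$.
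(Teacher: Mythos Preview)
Your proposal is correct and follows essentially the same approach as the paper's own proof: both use the AIR bijection $(-)^{\dag}$ built from $\Tr$ and $(-)^{*}$ for the correspondence (1)$\leftrightarrow$(2), reduce the Gorenstein condition to the fact that $\Tr$ restricts to a duality on stable Gorenstein projectives (the paper records this as Lemma~\ref{3.3}(3), via $\Tr M\simeq(\Omega^{1}M)^{*}$), and then invoke the $k$-duality $\mathbb{D}$ for the passage to Gorenstein injective $\tau^{-1}$-rigid pairs. The only difference is organizational: the paper routes the bijections as (1)$\leftrightarrow$(2), (2)$\leftrightarrow$(3), (3)$\leftrightarrow$(4), whereas you do (1)$\leftrightarrow$(2), (1)$\leftrightarrow$(4), (2)$\leftrightarrow$(3), but this is immaterial.
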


Recall from \cite{AIR} that a pair $(M,P)$ in $\mod\Lambda$ is called a support $\tau$-tilting pair if it is a $\tau$-rigid pair and $|P|+|M|=|\Lambda|$. We call a support $\tau$-tilting pair $(M,P)$ Gorenstein projective if $M$ is Gorenstein projective. In this case, we say that $M$ is a Gorenstein projective support $\tau$-tilting module. Dually, one can define Gorenstein injective support $\tau^{-1}$-tilting pairs and support $\tau^{-1}$-tilting modules. Let $(\mathcal{T},\mathcal{F})$ be a functorially finite torsion pair in $\mod\Lambda$. We call $(\mathcal{T},\mathcal{F})$ a Gorenstein torsion pair if the basic Ext-projective generator $M$ in $\mathcal{T}$ is Gorenstein projective. In this case, $\mathcal{T}$ is called a {\it Gorenstein projective} torsion class. Moreover, we call $(\mathcal{T},\mathcal{F})$ a {\it trivial} torsion pair if the basic Ext-projective generator $M$ is projective. We have the following main theorem.

\begin{theorem}\label{1.3}(Theorems \ref{3.7}, \ref{5.5}) Let $\Lambda$ be a finite dimensional algebra. Then there is a bijection between the following sets.
\begin{enumerate}[\rm(1)]
\item The set of Gorenstein projective support $\tau$-tilting modules in $\mod\Lambda$.
\item The set of functorially finite Gorenstein torsion pairs in $\mod\Lambda$.
\item The set of Gorenstein injective support $\tau^{-1}$-tilting modules in $\mod\Lambda$.
\item The set of Gorenstein projective support $\tau$-tilting modules in $\mod\Lambda^{\rm op}$.
\item The set of functorially finite Gorenstein torsion pairs in $\mod\Lambda^{\rm op}$.
\item The set of Gorenstein injective support $\tau^{-1}$-tilting modules in $\mod\Lambda^{\rm op}$.
\end{enumerate}
\end{theorem}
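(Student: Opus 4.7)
My plan is to reduce all six bijections to the classical Adachi--Iyama--Reiten correspondences \cite{AIR} restricted to the Gorenstein setting, using Theorem \ref{1.1} as the essential restriction lemma on the level of $\tau$-rigid pairs.

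I would first handle the equivalence $(1)\Leftrightarrow(2)$. Adachi--Iyama--Reiten give a bijection between basic support $\tau$-tilting pairs $(M,P)$ and functorially finite torsion pairs $(\mathcal{T},\mathcal{F})$ in $\mod\Lambda$, under which $\mathcal{T}=\Fac M$ and $M$ is recovered as the basic Ext-projective generator of $\mathcal{T}$. The definition of a Gorenstein torsion pair is designed precisely so that $(M,P)$ is Gorenstein projective if and only if $(\mathcal{T},\mathcal{F})$ is Gorenstein. Consequently the classical bijection restricts without further work, and the same argument applied to $\Lambda^{\rm op}$ supplies $(4)\Leftrightarrow(5)$.

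Next I would address $(1)\Leftrightarrow(4)$, which is the technical core. The classical AIR bijection between support $\tau$-tilting pairs over $\Lambda$ and over $\Lambda^{\rm op}$ is built by combining the transpose $\Tr$ on the non-projective part of $M$ with the duality $(-)^{*}=\Hom_{\Lambda}(-,\Lambda)$ applied to the projective part of $M$ and to $P$. To restrict the bijection to Gorenstein projective support $\tau$-tilting pairs it suffices to verify that both operations preserve Gorenstein projectivity, and this is exactly furnished by Theorem \ref{1.1}: the transpose implements the bijection between Gorenstein projective $\tau$-rigid pairs on the two sides, while projective modules are trivially Gorenstein projective and $(-)^{*}$ carries them to projective $\Lambda^{\rm op}$-modules. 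Composing with $(1)\Leftrightarrow(2)$ and $(4)\Leftrightarrow(5)$ also produces the bijection $(2)\Leftrightarrow(5)$ between Gorenstein torsion pairs on the two sides.

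For the remaining sets $(3)$ and $(6)$, I would apply the standard $k$-duality $D=\Hom_{k}(-,k):\mod\Lambda\to\mod\Lambda^{\rm op}$, which intertwines $\tau$ with $\tau^{-1}$, interchanges Gorenstein projective and Gorenstein injective modules, and turns (support) $\tau$-tilting pairs into (support) $\tau^{-1}$-tilting pairs. Thus $D$ produces $(4)\Leftrightarrow(3)$ and $(1)\Leftrightarrow(6)$, closing the web of bijections. The main obstacle in the whole argument is the transpose step used for $(1)\Leftrightarrow(4)$: one must keep careful track of the decomposition of $M$ into its projective and non-projective summands, since $\Tr$ is well-defined only on the latter, and then match the numerical condition $|M|+|P|=|\Lambda|$ with its dual on the opposite algebra. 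Once Theorem \ref{1.1} is in hand, however, this bookkeeping reduces to a routine check.
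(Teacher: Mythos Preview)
Your proposal is correct and follows essentially the same route as the paper: the bijections $(1)\Leftrightarrow(2)$ and $(4)\Leftrightarrow(5)$ are obtained by restricting the AIR correspondence $T\mapsto\Fac T$ using the very definition of a Gorenstein torsion pair (this is the paper's Theorem~\ref{5.5}), while $(1)\Leftrightarrow(4)$ comes from the functor $(-)^{\dag}$ together with Theorem~\ref{1.1} (the paper's Theorem~\ref{3.4}) to guarantee that Gorenstein projectivity is preserved, and the remaining links to $(3)$ and $(6)$ are supplied by the duality $\mathbb{D}$ exactly as you describe (the paper's Theorem~\ref{3.7}). The only cosmetic difference is that the paper reaches $(3)$ within $\mod\Lambda$ via the composite $\mathbb{D}\circ(-)^{\dag}$ rather than by first passing to $(4)$ and then applying $\mathbb{D}$, but this is the same map.
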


 Recall from \cite{DIJ} that algebras admitting finitely many isomorphism classes of indecomposable $\tau$-rigid modules are called $\tau$-tilting finite algebras and from \cite{C1} that algebras admitting finitely many isomorphism classes of indecomposable Gorenstein projective modules are called algebras of finite Cohen-Macaulay type (CM-finite for short). We are interested in algebras with finite Gorenstein projective $\tau$-tilting modules. We call an algebra $\Lambda$ CM-$\tau$-tilting finite if it has finitely many isomorphism classes of indecomposable Gorenstein projective $\tau$-rigid modules.  As an application of Theorem \ref{1.1}, we have the following theorem.
\begin{theorem}\label{1.2}(Theorem \ref{4.a}, Theorem \ref{4.4})
Let $\Lambda$ be a finite dimensional algebra.
 \begin{enumerate}[\rm(1)]
\item $\Lambda$ is CM-$\tau$-tilting finite if and only if $\Lambda$$^{\mathrm{op}}$ is CM-$\tau$-tilting finite.
\item Algebras of radical square zero are CM-$\tau$-tilting finite.
\end{enumerate}
\end{theorem}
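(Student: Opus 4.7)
The plan for (1) is to extract it from Theorem \ref{1.1}. That theorem supplies a bijection between Gorenstein projective $\tau$-rigid pairs in $\mod\Lambda$ and in $\mod\Lambda^{\op}$, and since the bijection is constructed from operations (such as $\tau$, $\nu$ and direct summands) that respect direct sums, it restricts to a bijection between indecomposable pairs on the two sides. Every indecomposable $\tau$-rigid pair has one of the two forms $(M,0)$ with $M$ an indecomposable Gorenstein projective $\tau$-rigid module, or $(0,P)$ with $P$ an indecomposable projective. The number of indecomposable projectives equals $|\Lambda|$ and coincides for $\Lambda$ and $\Lambda^{\op}$. Subtracting this finite constant from the cardinalities on both sides shows that the sets of indecomposable Gorenstein projective $\tau$-rigid modules in $\mod\Lambda$ and in $\mod\Lambda^{\op}$ have the same cardinality; in particular, one is finite iff the other is, which proves (1).

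For (2), fix a radical square zero algebra $\Lambda=kQ/\rad^2$. I would establish the stronger conclusion that $\Lambda$ is CM-finite. The key input is the structural description of Gorenstein projective modules over radical square zero algebras developed in \cite{CSZ, RZ1, RZ2}: every indecomposable non-projective Gorenstein projective $\Lambda$-module is simple, and a simple $S_i$ is Gorenstein projective precisely when the vertex $i$ belongs to a combinatorially defined "perfect" subset of $Q_0$. Since $Q_0$ is finite, there are only finitely many indecomposable Gorenstein projective $\Lambda$-modules up to isomorphism; in particular the set of indecomposable Gorenstein projective $\tau$-rigid modules is finite, so $\Lambda$ is CM-$\tau$-tilting finite.

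The main obstacle lies in (2), specifically in the structural classification above. The easy half—that the first syzygy of any $\Lambda$-module is semisimple, since it sits inside the radical of a projective cover and $\rad^2=0$—is immediate, as is the observation that any $M$ Gorenstein projective embeds into a projective of radical square zero, forcing $\rad^2 M=0$. What is harder is to upgrade this to the statement that indecomposable non-projective Gorenstein projective modules are already simple. This requires exploiting the totally acyclic resolution and the fact that $\Omega$ is an autoequivalence on the stable category of Gorenstein projective modules sending semisimples to semisimples, to conclude that the entire complex is semisimple in each degree. Once this classification is in hand via the cited references, finiteness follows formally. Part (1), by contrast, is essentially bookkeeping once Theorem \ref{1.1} is available, and the only thing to verify carefully is that the bijection respects the indecomposable direct-sum decomposition of pairs.
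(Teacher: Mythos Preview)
Your argument for (1) is correct and essentially coincides with the paper's: the paper simply invokes Theorem~\ref{3.4} (i.e.\ Theorem~\ref{1.1}) to pass between indecomposable Gorenstein projective $\tau$-rigid objects on the two sides, exactly as you do. Your extra bookkeeping about stripping off the $(0,P)$ pairs is fine and implicit in the paper's one-line proof.

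For (2) your route differs from the paper's. The paper does not attempt to classify indecomposable Gorenstein projective modules directly. Instead it invokes Chen's dichotomy \cite{C2}: a radical-square-zero algebra is either CM-free (in which case CM-finiteness is trivial) or self-injective; in the self-injective case the paper gives a short direct argument that $\Lambda$ must be Nakayama (each projective has simple socle and semisimple radical, hence is uniserial), so $\Lambda$ is representation-finite and a fortiori CM-finite. Your proposal instead appeals to a structural statement---every indecomposable non-projective Gorenstein projective module over such an algebra is simple---and cites \cite{CSZ,RZ1,RZ2}. That statement is true, but it is really a corollary of \cite{C2} combined with the Nakayama observation above, not something one finds stated in the references you cite (those treat monomial algebras and semi-Gorenstein-projective modules more generally). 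So your approach is not wrong, but it leans on a lemma whose most natural proof \emph{is} the paper's argument; the paper's use of \cite{C2} is both the correct citation and the shorter path.
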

Recall from \cite{AIR} that for a $\tau$-rigid module $M$ and $^{\bot}\tau M=\{N\in\mod\Lambda|\Hom(N,\tau M)=0\}$, the Ext-projective generator $T$ in $^{\bot}\tau M$ is called the Bongartz Completion of $M$. It is shown in \cite{AIR} that the Bongartz completion of a $\tau$-rigid module is a $\tau$-tilting module. A natural question is the following:

\begin{question}\label{1.4} Let $M\in\mod\Lambda$ be a Gorenstein projective $\tau$-rigid module. Is the Bongarz completion of $M$ a Gorenstein projective $\tau$-tilting module?
\end{question}

We give a counterexample to Question \ref{1.4} and show the following theorem.

\begin{theorem}\label{1.5} Let $\Lambda$ be an algebra with radical square zero and let $M\in\mod\Lambda$ be a Gorenstein projective $\tau$-rigid module. Then the Bongartz completion of $M$ is a Gorenstein projective $\tau$-tilting module.
\end{theorem}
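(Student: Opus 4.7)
My approach is to combine the structural characterization of Gorenstein projective modules over a radical square zero algebra $\Lambda = kQ/J^2$ with the AIR description of the Bongartz completion as an Ext-projective generator. By a standard characterization (see e.g.\ work of Chen or Ringel--Zhang), an indecomposable Gorenstein projective $\Lambda$-module is either an indecomposable projective $P_j$ or a simple $S_i$ with $i$ a ``Gorenstein vertex'': the set $\mathcal{V} \subseteq Q_0$ of such vertices is the maximal subset in which every vertex has both an incoming and an outgoing arrow inside $\mathcal{V}$, equivalently, what survives after iterated removal of all sources and sinks from $Q$. Writing $M = P_M \oplus \bigoplus_{i \in I_M} S_i$ with $P_M$ projective and $I_M \subseteq \mathcal{V}$, my task is to show that every indecomposable summand of the Bongartz completion $T$, which by AIR is the Ext-projective generator of the functorially finite torsion class $^{\bot}\tau M$, is again of this Gorenstein projective form.

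To identify the summands of $T$, I would describe $^{\bot}\tau M$ combinatorially. Over rad square zero, the Auslander--Reiten translate has a transparent structure: $\tau P = 0$ for any projective $P$, and for a simple $S_i$ without a loop, the composition factors of $\tau S_i$ can be read off from the minimal projective presentation $\bigoplus_{b:j\to i} P_j \to P_i \to S_i \to 0$ via the Nakayama functor, while the formula $\Hom_{\Lambda}(S_k, \tau S_\ell) \cong D\Ext^1_{\Lambda}(S_\ell, S_k)$ encodes pairwise $\tau$-rigidity by arrow counts. This gives an explicit combinatorial description of membership in $^{\bot}\tau M$ in terms of allowed composition factors and admissible structure maps. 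Running the AIR procedure for the Ext-projective generator in this torsion class, each new summand of $T$ is identified as either an indecomposable projective or a simple $S_k$; the heart of the matter is to verify that whenever a simple $S_k$ is chosen, necessarily $k \in \mathcal{V}$, whence $S_k$ is Gorenstein projective.

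The main obstacle is precisely this last verification: ruling out simple summands $S_k$ at non-Gorenstein vertices. The argument should proceed by induction on the filtration of $Q_0$ by successive source-and-sink removal. If $k$ is a source or sink of $Q$, then $k$ contributes to $T$ only via $P_k$, essentially because the lack of incoming or outgoing arrows at $k$ prevents $S_k$ from being Ext-projective in $^{\bot}\tau M$; propagating this observation through the filtration, using that the Gorenstein projective summands of $M$ reside on cycles inside $\mathcal{V}$, should conclude the proof. An appealing alternative is to invoke Theorem \ref{1.3}: show directly that $^{\bot}\tau M$ is a Gorenstein torsion class, so that its basic Ext-projective generator is Gorenstein projective by definition, reducing the problem to a single combinatorial verification on $Q$ and bypassing any explicit mutation analysis.
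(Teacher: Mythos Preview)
Your characterization of the indecomposable Gorenstein projective modules over $\Lambda=kQ/J^2$ is incorrect, and this undermines the whole approach. You assert that an indecomposable non-projective Gorenstein projective module is a simple $S_i$ at a vertex $i$ lying in the set $\mathcal{V}$ obtained by iterated removal of sources and sinks. But take $Q$ with vertices $1,2,3$ and arrows $1\to 2$, $2\to 3$, $3\to 2$. Here $\mathcal{V}=\{2,3\}$, yet $\Lambda$ is connected and not self-injective, so by Chen's theorem \cite{C2} it is CM-free and $S_2$, $S_3$ are \emph{not} Gorenstein projective. (Concretely, one checks that $\Ext^i_\Lambda(S_2,\Lambda)=0$ for all $i\ge1$, but $S_2^{**}\not\cong S_2$; your $\mathcal{V}$ detects at best semi-Gorenstein-projectivity, not Gorenstein projectivity.) Hence the decomposition $M=P_M\oplus\bigoplus_{i\in I_M}S_i$ with $I_M\subseteq\mathcal{V}$ and the subsequent combinatorial analysis rest on a false premise.

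The actual structural input, and the one the paper uses, is Chen's dichotomy: a connected algebra with radical square zero is either self-injective or CM-free. This makes the proof immediate. In the self-injective case every module is Gorenstein projective, so the Bongartz completion of any $\tau$-rigid module is automatically Gorenstein projective. In the CM-free case a Gorenstein projective $\tau$-rigid $M$ is already projective, so $\tau M=0$, $^{\bot}\tau M=\mod\Lambda$, and the Bongartz completion is $\Lambda$ itself. For a non-connected $\Lambda$ one argues blockwise. There is no need to describe $^{\bot}\tau M$ combinatorially, to analyse Ext-projective generators arrow by arrow, or to rule out simple summands at ``non-Gorenstein'' vertices; the dichotomy collapses the problem before any of that begins.
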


As a straight result of Theorem 1.5, one gets that for an algebra with radical square zero $\Lambda$, every indecomposable Gorenstein projective $\tau$-rigid module in $\mod\Lambda$ is a direct summand of a Gorenstein projective $\tau$-tilting module.

This paper is organized as follows. In Section 2, we recall some preliminaries of Gorenstein projective modules, $\tau$-tilting modules and $\tau$-rigid pairs. In Section 3, we introduce the definitions of Gorenstein projective $\tau$-rigid pairs and show a bijection between Gorenstein projective $\tau$-rigid pairs and Gorenstein injective $\tau^{-1}$-rigid pairs. In Section 4, we introduce the definition of functorially finite Gorenstein torsion pairs and show Theorem \ref{1.3}. In Section 5, we give the definition of CM-$\tau$-tilting finite algebras and prove that algebras of radical square zero are CM-$\tau$-tilting finite. In Section 6, we show that the Bongartz completion of a Gorenstein projective $\tau$-rigid module need not be a Gorenstein projective $\tau$-tilting module. Moreover, we show that for an algebra with radical square zero $\Lambda$, every indecomposable Gorenstein projective $\tau$-rigid module in $\mod\Lambda$ is a direct summand of a Gorenstein projective $\tau$-tilting module.

Throughout this paper, $\Lambda$ is a finite dimensional algebra over a field $K$ and $\mod \Lambda$ is the category of finitely generated right $\Lambda$-modules.  Denote by $\mathbb{D}=\mathrm{Hom}_{K}(-,K)$ the ordinary dual and denote by $\tau$ the Auslander-Reiten translation functor.

\vspace{0.2cm}

{\bf Acknowledgements} The authors would like to thank Profs. Xiao-Wu Chen, Zhaoyong Huang, Osamu Iyama, Zhi-Wei Li and Pu Zhang for useful suggestions and discussions.
\section{Preliminaries}
In this section, we recall some basic results on Gorenstein projective (resp. injective) modules and $\tau$-tilting (resp. $\tau^{-1}$-tilting) modules.

 Denote by $\mathcal{P}(\Lambda)$ (resp. $\mathcal{I}(\Lambda)$) the category of finitely generated projective (resp. injective) $\Lambda$-modules. Firstly, we need the definition of Gorenstein projective modules and the definition of Gorenstein injective modules in \cite{EJ1,EJ2}.
\begin{definition}\label{2.1} Let $\Lambda$ be a finite dimensional algebra and $M\in\mod\Lambda$.
\begin{enumerate}[\rm(1)]
\item  $M$ is called {\it Gorenstein projective}, if there is an exact sequence $\cdots \rightarrow P_{-1} \rightarrow P_{0} \rightarrow P_{1} \rightarrow \cdots$ in $\mathcal{P}(\Lambda)$, which stays exact under $\Hom_{\Lambda}(-,\Lambda)=(-)^{\ast}$, such that $M\simeq\Im ( P_{-1} \rightarrow P_{0})$.
\item $M$ is called {\it Gorenstein injective}, if there is an exact sequence $\cdots \rightarrow I_{-1} \rightarrow I_{0} \rightarrow I_{1} \rightarrow \cdots$ in $\mathcal{I}(\Lambda)$, which stays exact under $\Hom_{\Lambda}(\mathbb{D}\Lambda,-)$, such that $M\simeq\Im ( I_{-1} \rightarrow I_{0})$.
\end{enumerate}
\end{definition}

 Denote by $\mathcal{GP}(\Lambda)$ (resp. $\mathcal{GI}(\Lambda)$) the category of all finitely generated Gorenstein projective (resp. Gorenstein injective) $\Lambda$-modules and by $\underline{\mathcal{GP}(\Lambda)}$ (resp. $\overline{\mathcal{GI}(\Lambda)}$) the stable category of $\mathcal{GP}(\Lambda)$ (resp. $\mathcal{GI}(\Lambda)$) which modulo projective modules (resp. injective modules).
%For a $\Lambda$-module $M$, we introduce the {\it Gorenstein projective dimension} of $M$, which denoted by $\Gpd_{\Lambda}M$.

%\begin{definition}\label{2.2}

%For a $\Lambda$-module $M$, $\Gpd_{\Lambda}M \leq n$ ($n \in \mathbb{N}$) if and only if $M$ has a Gorenstein projective resolution of length $n$, that is $0 \rightarrow G_{n} \rightarrow\cdots\rightarrow G_{0} \rightarrow M \rightarrow 0$.
%Similarly, one defines the {\it Gorenstein injective dimension} $\Gid_{\Lambda}M$ of $M$.
%\end{definition}

\begin{remark}\label{2.3} Let $\Lambda$ be a finite dimensional algebra.
 \begin{enumerate}[\rm(1)]
\item It is shown in \cite{AuB, RZ1} that a module $M\in\mod\Lambda$ is {\it Gorenstein projective} if and only if $M\simeq M^{\ast\ast}$ and $\Ext_{\Lambda}^{i}(M,\Lambda)=\Ext_{\Lambda}^{i}(M^{\ast},\Lambda)=0$ hold for all $i\geq 1$ if and only if $\Ext_{\Lambda}^{i}(M,\Lambda)=0$ and $\Ext_{\Lambda}^{i}(\Tr M,\Lambda)=0$ hold for all $i\geq 1$.
\item The category $\mathcal{GP}(\Lambda)$ is closed under extensions, finite direct sums and kernel of epimorphisms.
\item The category $\mathcal{GI}(\Lambda)$ is closed under extensions, finite direct sums and cokernel of monomorphisms.
\item $\mathbb{D}: \mathcal{GP}(\Lambda)\rightarrow\mathcal{GI}(\Lambda^{\rm {op}})$ is a duality.
\end{enumerate}
\end{remark}

Next, we recall the notions of $\tau$-rigid (resp. $\tau^{-1}$-rigid) modules and $\tau$-tilting (resp. $\tau^{-1}$-tilting) modules from \cite{S} and \cite{AIR}.

\begin{definition}\label{2.4}Let $\Lambda$ be a finite dimensional algebra and $M\in\mod\Lambda$.
\begin{enumerate}[\rm(1)]
\item We call $M$ {\it $\tau$-rigid} if $\Hom_{\Lambda}(M,\tau M)=0$,
where $\tau$ is Auslander-Reiten translation. Moreover, $M$ is called a {\it $\tau$-tilting} module if $M$ is $\tau$-rigid
and $|M|=|\Lambda|$.
\item We call $M$ {\it support $\tau$-tilting} if there exists an idempotent $e$ of $\Lambda$ such that $M$ is a $\tau$-tilting $\Lambda/ (e)$-module.
\item We call $M $ {\it $\tau^{-1}$-rigid} if $\Hom_{\Lambda}(\tau^{-1}M, M)=0$.
 Moreover, $M $ is called a {\it $\tau^{-1}$-tilting} module if $M$ is $\tau^{-1}$-rigid
and $|M|=|\Lambda|$.
\item We call $M$ {\it support $\tau^{-1}$-tilting} if $M$ is a $\tau^{-1}$-tilting $\Lambda/(e)$-module for some idempotent $e$ of $\Lambda$.
\end{enumerate}
\end{definition}

The following definitions in \cite{AIR} are useful in this paper.

\begin{definition}\label{2.6}
Let $(M,P)$ and $(I,N)$ be pairs with $M, N\in \mod \Lambda$, $P\in \mathcal{P}(\Lambda)$ and $I\in \mathcal{I}(\Lambda)$.
 \begin{enumerate}[\rm(1)]
\item We call $(M,P)$ a {\it $\tau$-rigid pair} if $M$ is $\tau$-rigid and $\Hom_{\Lambda}(P,M)=0$.
\item We call $(M,P)$ a {\it support $\tau$-tilting pair} if $(M,P)$ is $\tau$-rigid and $|M|+|P|=|\Lambda|$.
\item We call $(I,N)$ a {\it $\tau^{-1}$-rigid pair} if $N$ is $\tau^{-1}$-rigid and $\Hom_{\Lambda}(N,I)=0$.
\item We call $(I,N)$ a {\it support $\tau^{-1}$-tilting pair} if $(I,N)$ is $\tau^{-1}$-rigid and $|N|+|I|=|\Lambda|$.
\end{enumerate}
\end{definition}

The following property of $\tau$-rigid pairs \cite[Proposition 2.16]{AIR} is essential in this paper.

\begin{lemma}\label{2.7} For any support $\tau$-tilting pair $(M, P)$ in $\mod\Lambda$, one gets a unique torsion pair $(\Fac M, \Sub(\tau M \oplus \nu P))$ in $\mod\Lambda$.
\end{lemma}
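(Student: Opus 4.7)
The plan is to verify three things: that $\Fac M$ is a torsion class in $\mod\Lambda$, that the corresponding torsion-free class coincides with $\Sub(\tau M\oplus\nu P)$, and that the resulting torsion pair is unique. Uniqueness is immediate since a torsion pair is determined by either of its halves, so the real content is the first two assertions.

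I would begin by showing that $\Fac M$ is closed under extensions; closure under quotients is automatic from the definition. Given $0\to A\to B\to C\to 0$ with $A,C\in\Fac M$, the module $A$ is a quotient of some $M^n$, so left exactness of $\Hom_\Lambda(-,\tau M)$ gives $\Hom_\Lambda(A,\tau M)=0$, and combined with the Auslander-Reiten formula this yields $\Ext^1_\Lambda(M,A)=0$. Applying $\Hom_\Lambda(M,-)$ to the short exact sequence then forces the surjection $\Hom_\Lambda(M,B)\twoheadrightarrow\Hom_\Lambda(M,C)$, so any surjection $M^k\twoheadrightarrow C$ lifts to $B$; combining it with a surjection onto $A$ produces a surjection $M^{n+k}\twoheadrightarrow B$, and hence $B\in\Fac M$. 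Because every object of $\Fac M$ is a quotient of a power of $M$, the torsion-free class of the resulting torsion pair is
\[
\mathcal{F}:=\{N\in\mod\Lambda\mid\Hom_\Lambda(M,N)=0\}.
\]

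Next I would check the easy inclusion $\Sub(\tau M\oplus\nu P)\subseteq\mathcal{F}$. As $\mathcal{F}$ is closed under direct sums and submodules, this reduces to $\Hom_\Lambda(M,\tau M)=0$, which is the definition of $\tau$-rigidity, and $\Hom_\Lambda(M,\nu P)=0$. For the latter I would use the standard identity $\Hom_\Lambda(M,\nu P)\cong\mathbb{D}\Hom_\Lambda(P,M)$ for projective $P$ together with the hypothesis $\Hom_\Lambda(P,M)=0$ built into the definition of a support $\tau$-tilting pair.

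The main obstacle will be the reverse inclusion $\mathcal{F}\subseteq\Sub(\tau M\oplus\nu P)$, and this is where the counting condition $|M|+|P|=|\Lambda|$ is indispensable, since $\tau$-rigidity alone does not suffice. For $N\in\mathcal{F}$, the plan is to construct an embedding $N\hookrightarrow(\tau M\oplus\nu P)^r$ by taking a minimal injective copresentation of $N$, translating it via the Nakayama functor into projective data on the $\Lambda^{\mathrm{op}}$ side, and then invoking the Auslander-Reiten formula to recognise the injective terms as summands of $\tau M\oplus\nu P$. The delicate point is to verify that no indecomposable injective outside $\add(\tau M\oplus\nu P)$ is needed; the equality $|M|+|P|=|\Lambda|$ is precisely what forces $\tau M\oplus\nu P$ to be large enough to cogenerate $\mathcal{F}$. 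Making this counting step rigorous, as in Adachi-Iyama-Reiten's Proposition~2.16, is the technical heart of the argument.
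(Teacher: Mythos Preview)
The paper does not give its own proof of this lemma; it simply records it as \cite[Proposition~2.16]{AIR} and uses it as a black box. Your sketch is a faithful outline of the original Adachi--Iyama--Reiten argument: the verification that $\Fac M$ is closed under extensions via the Auslander--Reiten formula, the identification of the torsion-free class as $M^\perp$, and the easy inclusion using $\Hom_\Lambda(M,\nu P)\cong\mathbb{D}\Hom_\Lambda(P,M)$ are all correct, and you rightly isolate the reverse inclusion $\mathcal{F}\subseteq\Sub(\tau M\oplus\nu P)$ as the place where the numerical condition $|M|+|P|=|\Lambda|$ does the real work.
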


We also need the following results in \cite[Theorems 2.7, 2.15]{AIR}.

\begin{theorem}\label{2.8} For an algebra $\Lambda$, there is a bijection between the following sets.
\begin{enumerate}[\rm(1)]
\item The set of support $\tau$-tilting modules.
\item The set of functorially finite torsion classes.
\item The set of support $\tau^{-1}$-tilting modules.
\item The set of functorially finite torsion-free classes.
\end{enumerate}
\end{theorem}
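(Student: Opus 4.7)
The plan is to establish the theorem by constructing the two bijections $(1)\leftrightarrow(2)$ and $(3)\leftrightarrow(4)$ separately, and then linking them via the torsion pair $(\Fac M,\Sub(\tau M\oplus\nu P))$ produced by Lemma \ref{2.7}.

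For $(1)\to(2)$, given a support $\tau$-tilting pair $(M,P)$, I send $M$ to $\Fac M$. Closure under quotients is tautological, while closure under extensions follows from the equivalent characterization of $\tau$-rigidity $\Hom_{\Lambda}(M,\tau M)=0$ as $\Ext_{\Lambda}^{1}(X,M)=0$ for all $X\in\Fac M$, obtained by combining the Auslander-Reiten formula $\DD\Ext_{\Lambda}^{1}(X,M)\cong\Hom_{\Lambda}(M,\tau X)$ (modulo injectively factoring maps) with a presentation of $X$ as a quotient of an object in $\add M$. Functorial finiteness of $\Fac M$ is a standard consequence of the functorial finiteness of $\add M$. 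For $(2)\to(1)$, I appeal to Auslander-Smal\o{}'s theorem: every functorially finite torsion class $\mathcal{T}$ has only finitely many indecomposable Ext-projective objects, whose basic sum $M(\mathcal{T})$ is an Ext-projective generator of $\mathcal{T}$. Ext-projectivity together with the Auslander-Reiten formula yields $\Hom_{\Lambda}(M(\mathcal{T}),\tau M(\mathcal{T}))=0$, so $M(\mathcal{T})$ is $\tau$-rigid. Letting $P(\mathcal{T})$ be the basic sum of indecomposable projectives not belonging to $\mathcal{T}$, one obtains a $\tau$-rigid pair $(M(\mathcal{T}),P(\mathcal{T}))$, and the upgrade to support $\tau$-tilting reduces to the numerical equality $|M(\mathcal{T})|+|P(\mathcal{T})|=|\Lambda|$.

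The bijection $(3)\leftrightarrow(4)$ is obtained by the strictly dual argument, using the duality $\DD:\mod\Lambda\to\mod\Lambda^{\op}$, which exchanges $\tau$ with $\tau^{-1}$, projectives with injectives, and torsion-free classes in $\mod\Lambda$ with torsion classes in $\mod\Lambda^{\op}$; each step above transposes verbatim. To bridge $(1)\leftrightarrow(3)$, I send a support $\tau$-tilting pair $(M,P)$ to the support $\tau^{-1}$-tilting pair corresponding under $(4)\to(3)$ to the functorially finite torsion-free class $\Sub(\tau M\oplus\nu P)$ supplied by Lemma \ref{2.7}; the inverse map uses the dual torsion class on the other side.

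The main obstacle is the numerical identity $|M(\mathcal{T})|+|P(\mathcal{T})|=|\Lambda|$ arising in $(2)\to(1)$. The cleanest route is via $g$-vectors: one shows that the $g$-vectors of the indecomposable summands of $M(\mathcal{T})$, together with the negatives of the classes $[P_{i}]$ for the indecomposable projective summands $P_{i}$ of $P(\mathcal{T})$, form a $\mathbb{Z}$-basis of $K_{0}(\mathcal{P}(\Lambda))\cong\mathbb{Z}^{|\Lambda|}$. Linear independence uses the injectivity of the $g$-vector assignment on $\tau$-rigid pairs, while spanning is forced by functorial finiteness together with Ext-projective approximation of projectives by objects of $\mathcal{T}$. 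Setting up this clean $K$-theoretic picture for the interplay between Ext-projectives and their projective complements is the principal technical input; once it is in place, all remaining verifications assemble routinely and the four-way bijection is immediate.
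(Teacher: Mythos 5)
The paper does not actually prove Theorem~\ref{2.8}: it is recalled verbatim from Adachi--Iyama--Reiten \cite{AIR} (their Theorems 2.7 and 2.15), so there is no in-paper argument to measure yours against. Judged on its own, your outline reproduces the architecture of the AIR proof ($M\mapsto\Fac M$, Ext-projective generators for the inverse, $\mathbb{D}$ to transport the statement to $\Lambda^{\op}$ for the $\tau^{-1}$ half, and $\Sub(\tau M\oplus\nu P)$ from Lemma~\ref{2.7} for the bridge), but it has a genuine gap at exactly the point you yourself identify as the main obstacle. The numerical identity $|M(\mathcal{T})|+|P(\mathcal{T})|=|\Lambda|$ is not something you can extract from a $g$-vector ``spanning'' argument as described: linear independence of the $g$-vectors of a $\tau$-rigid pair is indeed available independently, but the assertion that they \emph{span} $K_{0}(\mathcal{P}(\Lambda))$ is equivalent to the associated two-term presilting complex being silting, which is in turn equivalent to the very maximality statement being proved. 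In \cite{AIR} (and in Demonet--Iyama--Jasso) the basis property of $g$-vectors is \emph{deduced from} the support $\tau$-tilting property, not used to establish it; the actual proof of the count goes through the Bongartz-type completion of a $\tau$-rigid pair (AIR Theorem 2.10), i.e.\ the universal-extension construction $0\to\Lambda\to E\to M_{0}\to 0$ showing ${}^{\perp}(\tau M)\cap P^{\perp}$ is a functorially finite torsion class whose Ext-projective generator has the right number of summands. ``Ext-projective approximation of projectives by objects of $\mathcal{T}$'' produces relations in $K_{0}(\mod\Lambda)$, not a spanning set in $K_{0}(\mathcal{P}(\Lambda))$, so nothing in your sketch substitutes for that construction.

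A smaller but real slip: for closure of $\Fac M$ under extensions you need $\Ext^{1}_{\Lambda}(M,X)=0$ for all $X\in\Fac M$ (so that the pullback of an extension along an epimorphism $M_{0}\twoheadrightarrow Z$ splits); this is the Auslander--Smal\o{} equivalence with $\Hom_{\Lambda}(M,\tau M)=0$. What you wrote, $\Ext^{1}_{\Lambda}(X,M)=0$ for all $X\in\Fac M$, is the Ext-projectivity of $M$ in $\Fac M$ --- a different (true, but not obviously equivalent) statement, and it is the one you need later to show that $\Fac M$ determines $M$ as its Ext-projective generator. Both facts are required; they should be stated and used in the right places rather than conflated.
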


\section{Gorenstein Projective $\tau$-tilting modules}
In this section, we give a bijection theorem between Gorenstein projective support $\tau$-tilting modules and Gorenstein injective support $\tau^{-1}$-tilting modules.

Firstly, we give the definitions of Gorenstein projective $\tau$-rigid pairs, Gorenstein projective support $\tau$-tilting pairs and Gorenstein projective $\tau$-tilting modules.

\begin{definition}\label{3.1}
Let $(M,P)$ be a pair in $\mod\Lambda$ with $P$ projective.
 \begin{enumerate}[\rm(1)]
\item We call a pair $(M,P)$ {\it Gorenstein projective $\tau$-rigid} if it is a $\tau$-rigid pair and $M$ is Gorenstein projective.
\item We call a pair $(M,P)$ {\it Gorenstein projective support $\tau$-tilting} if it is a support $\tau$-tilting pair and $M$ is Gorenstein projective.
\item We call $M\in\mod\Lambda$ {\it Gorenstein projective $\tau$-rigid} if it is both Gorenstein projective and $\tau$-rigid.
\item We call $M\in\mod\Lambda$ {\it Gorenstein projective  $\tau$-tilting} if it is both Gorenstein projective and $\tau$-tilting.
\end{enumerate}
\end{definition}

Similarly, one can define the {\it Gorenstein injective $\tau^{-1}$-rigid pairs}, {\it Gorenstein injective support $\tau^{-1}$-tilting pairs} and {\it Gorenstein injective $\tau^{-1}$-tilting} modules as follows.

\begin{definition}\label{3.a}
Let $(I,N)$ be a pair in $\mod\Lambda$ with $I$ injective.
 \begin{enumerate}[\rm(1)]
\item We call a pair $(I,N)$ {\it Gorenstein injective $\tau^{-1}$-rigid} if it is a $\tau^{-1}$-rigid pair and $N$ is Gorenstein injective.
\item We call a pair $(I,N)$ {\it Gorenstein injective support $\tau^{-1}$-tilting} if it is a support $\tau^{-1}$-tilting pair and $N$ is Gorenstein injective.
\item We call $N\in\mod\Lambda$ {\it Gorenstein injective $\tau^{-1}$-rigid} if it is both Gorenstein injective and $\tau^{-1}$-rigid.
\item We call $N\in\mod\Lambda$ {\it Gorenstein injective  $\tau^{-1}$-tilting} if it is both Gorenstein injective and $\tau^{-1}$-tilting.
\end{enumerate}
\end{definition}

To understand the definitions above we give the following observation.
\begin{example}\label{3.b}
\begin{enumerate}[\rm (1)]
\item If the global dimension of $\Lambda$ is finite, then the unique Gorenstein projective $\tau$-tilting module (resp. Gorenstein injective  $\tau^{-1}$-tilting module) in $\mod\Lambda$ is $\Lambda$ (resp. $\mathbb{D}\Lambda$).
\item If the global dimension of $\Lambda$ is finite, then every indecomposable Gorenstein projective $\tau$-rigid (resp. Gorenstein injective $\tau^{-1}$-rigid) module is projective (resp. injective).
\item If $\Lambda$ is self-injective, then the set of basic Gorenstein projective $\tau$-tilting (resp. Gorenstein injective  $\tau^{-1}$-tilting) modules coincides with the set of basic $\tau$-tilting (resp. $\tau^{-1}$-tilting) modules.
\item If $\Lambda$ is self-injective, the set of indecomposable Gorenstein projective $\tau$-rigid (resp. Gorenstein injective $\tau^{-1}$-rigid) modules coincides with the set of indecomposable $\tau$-rigid (resp. $\tau^{-1}$-rigid) modules.
\end{enumerate}
\end{example}

In what follows, we only focus on Gorenstein projective $\tau$-tilting modules since Gorenstein injective $\tau^{-1}$-tilting modules can be get dually. Now we give the following example to show that Gorenstein projective support $\tau$-tilting modules exists for classes of algebras.

\begin{example}\label{3.d} Let $\Lambda$ be an algebra given by the quiver $Q$: $\xymatrix{1\ar@<.2em>[r]^{a_{1}}&2\ar@<.2em>[l]^{a_{2}}\ar@<.2em>[r]^{b_{2}}&3}$ with the relations $a_{1}a_{2}=a_{2}a_{1}=0$.
\begin{enumerate}[\rm(1)]
\item $\Lambda\simeq\begin{smallmatrix} 1\\ &2\\
&&3\end{smallmatrix}\oplus \begin{smallmatrix} & &2\\ &1& &3\\
\end{smallmatrix}\oplus \begin{smallmatrix}3 \\ \end{smallmatrix}$ is an 1-Gorenstein algebra.
\item $\begin{smallmatrix} \\ &2\\
&&3\end{smallmatrix}\oplus \begin{smallmatrix} & &2\\ &1& &3\\
\end{smallmatrix}\oplus \begin{smallmatrix}3 \\ \end{smallmatrix}$ is a Gorenstein projective $\tau$-tilting module in $\mod\Lambda$.
\item $\begin{smallmatrix} \\ &2\\
&&3\end{smallmatrix}\oplus \begin{smallmatrix}3 \\ \end{smallmatrix}$, $\begin{smallmatrix}3 \\ \end{smallmatrix}$ and $ \begin{smallmatrix}0\\ \end{smallmatrix}$ are Gorenstein projective support $\tau$-tilting modules but not $\tau$-tilting modules in $\mod\Lambda$.
\end{enumerate}
\end{example}

 We should remark that a Gorenstein projective support $\tau$-tilting module $M\in \mod \Lambda$ need not be a Gorenstein projective $\tau$-tilting module in $\mod\Lambda/(e)$, where $(M,e\Lambda)$ is the corresponding support $\tau$-tilting pair.
 In the following, we give an example to show this.

\begin{example}\label{3.e} Let $\Lambda$ be given by the quiver $Q:$
$\xymatrix{1\ar[r]^{a_1}&2\ar[d]^{a_2}\\
 &3\ar[lu]^{a_3}}$ with the relations $a_1a_2=a_2a_3=a_3a_1=0$. Then
 \begin{enumerate}[\rm(1)]
 \item $\Lambda$ is a self-injective algebra.
 \item $T=\begin{smallmatrix} \\&&\\2\\
&&\end{smallmatrix}\oplus \begin{smallmatrix} & &\\ &2\\&&3\\
\end{smallmatrix}$ is a non-projective Gorenstein projective support $\tau$-tilting $\Lambda$-module and $(T,P(1))$ is a support $\tau$-tilting pair.
\item The algebra $\Lambda_1=\Lambda/(e_1)$ is given by the quiver $Q'$:$\xymatrix{2\ar[r]^{a_2}&3}$ and hence it is hereditary.
\item $T$ is a $\tau$-tilting module (in fact a tilting module) in $\mod\Lambda_1$, but it is not Gorenstein projective since all Gorenstein projective modules in $\mod\Lambda_1$ should be projective.

 \end{enumerate}
\end{example}

Denote by $I={\rm ann}_{\Lambda} T$ the right annihilator of $T\in\mod \Lambda$. We have the following proposition on the property of Gorenstein projective support $\tau$-tilting modules.
\begin{proposition}\label{3.c} Let $T$ be a Gorenstein projective support $\tau$-tilting module in $\mod\Lambda$. Then $T$ is Gorenstein projective in $\mod \Lambda/I$ if and only if $T\simeq \Lambda/I$.
\end{proposition}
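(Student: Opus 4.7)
The ``if'' direction is immediate: if $T \simeq \Lambda/I$ as right $\Lambda/I$-modules, then $T$ is the free $\Lambda/I$-module of rank one, hence projective and \textit{a fortiori} Gorenstein projective.

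For the converse, write $\bar\Lambda := \Lambda/I$ and let $(T, e\Lambda)$ be the support $\tau$-tilting pair, so that $T$ is a $\tau$-tilting module over $A := \Lambda/\langle e\rangle$. Since $\langle e\rangle \subseteq I$, $T$ is naturally a $\bar\Lambda$-module, and is faithful over $\bar\Lambda$ by the very definition of $I$. The plan proceeds along the following chain: (i) $T$ is a $\tau$-tilting module over $\bar\Lambda$; (ii) being faithful, $T$ is then a classical tilting $\bar\Lambda$-module by the standard characterisation (faithful $\tau$-tilting equals classical tilting from \cite{AIR}), so in particular $\pd_{\bar\Lambda}T<\infty$; (iii) the standard fact in Gorenstein homological algebra that a Gorenstein projective module of finite projective dimension is projective---proved by iteratively fitting $T$ into a short exact sequence $0 \to T \to Q \to T' \to 0$ with $Q$ projective and $T'$ Gorenstein projective of strictly smaller projective dimension---forces $T$ to be projective over $\bar\Lambda$; (iv) a basic projective $\bar\Lambda$-module with $|\bar\Lambda|$ non-isomorphic indecomposable summands is isomorphic to $\bar\Lambda$ itself.

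The only step demanding genuine work, and the main obstacle, is (i); I plan to handle it in two substeps. First, the sincerity of $\tau$-tilting modules over $A$ (a consequence of the standard inequality $|M|+|P|\leq|A|$ for $\tau$-rigid pairs, with equality precisely for support $\tau$-tilting pairs) combined with $T\cdot(I/\langle e\rangle) = 0$ shows that every simple $A$-module descends to a simple $\bar\Lambda$-module, yielding $|T|=|A|=|\bar\Lambda|$. Second, for $\tau$-rigidity of $T$ over $\bar\Lambda$ I would use the Ext-projective-generator criterion for $\tau$-rigidity: the subcategory $\Fac T$ coincides whether computed in $\mod A$ or in $\mod \bar\Lambda$, and the natural map $\Ext^{1}_{\bar\Lambda}(T,M)\hookrightarrow \Ext^{1}_{A}(T,M)$ is injective for every $\bar\Lambda$-module $M$, since any $A$-linear map between $\bar\Lambda$-modules is automatically $\bar\Lambda$-linear, whence any $A$-splitting of a $\bar\Lambda$-extension is already a $\bar\Lambda$-splitting. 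Thus $\Ext^{1}_{A}(T,\Fac T) = 0$ descends to $\Ext^{1}_{\bar\Lambda}(T,\Fac T) = 0$, making $T$ Ext-projective in $\Fac T$ over $\bar\Lambda$, equivalently $\tau$-rigid over $\bar\Lambda$. Combined with the cardinality identity this yields (i), after which (ii)--(iv) are essentially bookkeeping.
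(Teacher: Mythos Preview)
Your proof is correct and shares the crucial reduction with the paper: both first argue that $T$ is a (classical) tilting $\bar\Lambda$-module via \cite[Proposition~2.2(b)]{AIR}; you spell out the passage to $\tau$-tilting over $\bar\Lambda$ in step~(i) more carefully than the paper, which simply cites AIR. The routes diverge afterwards. The paper uses the tilting coresolution $0 \to \bar\Lambda \to T_0 \to T_1 \to 0$ with $T_i\in\add T$: Gorenstein projectivity of $T_1$ gives $\Ext^1_{\bar\Lambda}(T_1,\bar\Lambda)=0$, so this sequence splits, forcing $\bar\Lambda\in\add T$ and hence $T\simeq\bar\Lambda$ by counting summands. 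Your route---finite projective dimension plus Gorenstein projective implies projective---is equally valid; the paper's is marginally more direct, while yours invokes a reusable homological lemma.

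One correction to your parenthetical justification of (iii): in $0\to T\to Q\to T'\to 0$ with $Q$ projective, the cosyzygy $T'$ has projective dimension $\pd T+1$ when $\pd T\ge 1$, not smaller (since $\Ext^i(T,-)\simeq\Ext^{i+1}(T',-)$ for $i\ge1$). The standard argument runs the other way, along the projective resolution: here $\pd_{\bar\Lambda}T\le 1$ (classical tilting) and $\Ext^1_{\bar\Lambda}(T,\bar\Lambda)=0$, so the resolution $0\to P_1\to P_0\to T\to 0$ splits and $T$ is projective. This slip is cosmetic and does not affect the validity of your overall argument.
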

\begin{proof}  We only show the necessity. Since tilting modules are precisely faithful support $\tau$-tilting modules \cite[Proposition 2.2(b)]{AIR}, one gets that $T$ should be a tilting module in $\mod \Lambda/I$. Then there is an exact sequence $0\rightarrow\Lambda/I\rightarrow T_0\rightarrow T_1\rightarrow0$ in $\mod\Lambda/I$ with $T_i\in\add_{\Lambda} T$. Note that $T$ is Gorenstein projective, then $\Ext_{\Lambda/I}^{1}(T_1, \Lambda/I)=0$. Therefore $\Lambda/I$ is a direct summand of $T_0$. Since $|T_0|\leq|T|=|\Lambda/I|$, one gets that $T\simeq \Lambda/I$.
\end{proof}

Recall from \cite{AIR} that there is a bijection between $\tau$-rigid modules in $\mod\Lambda$ and $\tau$-rigid modules in $\mod\Lambda ^{\mathrm{op}}$. We decompose $M$ as $M=M_{\mathrm{p}}\bigoplus M_{\mathrm{np}}$, where $M_{\mathrm{p}}$ is a maximal projective direct summand of $M$ and $M_{\mathrm{np}}$ has no projective direct summand of $M$. For a $\tau$-rigid pair $(M,P)$ in $\mod\Lambda$, let $(M,P)^{\dag}=(\Tr M_{\mathrm{np}}\bigoplus P^{\ast},M_{\mathrm{p}}^{\ast})=(\Tr M\bigoplus P^{\ast},M_{\mathrm{p}}^{\ast})$. For a $\tau$-rigid $\Lambda$-module $M$, we simply write $M^{\dag}=\Tr M_{\mathrm{np}}\bigoplus P^{\ast}$.

The following bijections due to Adachi, Iyama and Reiten are quite essential in this paper.

\begin{lemma}\label{3.2}
Let $\Lambda$ be an algebra. Then the functor $(-)^{\dag}$ gives the following bijections.
 \begin{enumerate}[\rm (1)]
\item $\tau$-rigid modules (pairs) in $\mod \Lambda$ $\leftrightarrow$ $\tau$-rigid modules (pairs) in $\mod\Lambda^{\mathrm{op}}$.
\item support $\tau$-tiling modules (pairs) in $\mod \Lambda$ $\leftrightarrow$ support $\tau$-tiling modules (pairs) in $\mod \Lambda^{\rm op}$.
\end{enumerate}
such that $(-)^{\dag \dag}=\id$.
\end{lemma}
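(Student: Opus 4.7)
The plan is to verify that $(-)^\dag$ preserves each defining condition using three tools: (i) the Auslander--Bridger transpose $\Tr$ induces a duality $\underline{\mod}\Lambda \leftrightarrow \underline{\mod}\Lambda^{\op}$ with $\Tr\Tr X \cong X$ whenever $X$ has no projective summand; (ii) $(-)^\ast = \Hom_\Lambda(-,\Lambda)$ is an involutive duality $\mathcal{P}(\Lambda) \leftrightarrow \mathcal{P}(\Lambda^{\op})$; and (iii) the Auslander--Reiten-type identity $\Hom_\Lambda(X, \tau Y) \cong \mathbb{D}(X \otimes_\Lambda \Tr Y)$ obtained from $\tau = \mathbb{D}\Tr$ and the tensor--hom adjunction.

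The first step is to show that if $(M,P)$ is $\tau$-rigid in $\mod\Lambda$, then $(M,P)^\dag = (\Tr M_{\mathrm{np}} \oplus P^\ast,\; M_{\mathrm{p}}^\ast)$ is $\tau$-rigid in $\mod\Lambda^{\op}$. The $\tau$-rigidity of $\Tr M_{\mathrm{np}}$ reduces via (iii) to
\[
\Hom_{\Lambda^{\op}}(\Tr M_{\mathrm{np}},\,\tau_{\op}\Tr M_{\mathrm{np}}) \cong \mathbb{D}(\Tr M_{\mathrm{np}} \otimes_{\Lambda^{\op}} M_{\mathrm{np}}) \cong \Hom_\Lambda(M_{\mathrm{np}}, \tau M_{\mathrm{np}}) = 0.
\]
The cross-terms involving the projective summand $P^\ast$ and the orthogonal projective $M_{\mathrm{p}}^\ast$ translate, using $\tau_{\op}\Tr M_{\mathrm{np}} \cong \mathbb{D}M_{\mathrm{np}}$ (from (i)) and the duality on projectives (from (ii)), to the vanishings $\Hom_\Lambda(P, M_{\mathrm{np}}) = 0$, $\Hom_\Lambda(M_{\mathrm{p}}, \tau M_{\mathrm{np}}) = 0$, and $\Hom_\Lambda(P, M_{\mathrm{p}}) = 0$, all contained in $\Hom(P, M) = 0$ together with the $\tau$-rigidity of $M$.

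Next, I would check that numerical invariants transfer: $|\Tr M_{\mathrm{np}}| = |M_{\mathrm{np}}|$ since $\Tr$ bijects non-projective indecomposables, and $|P^\ast| = |P|$, $|M_{\mathrm{p}}^\ast| = |M_{\mathrm{p}}|$ by (ii). Combined with $|\Lambda| = |\Lambda^{\op}|$, this transports the equality $|M| + |P| = |\Lambda|$ across $\dag$, yielding the bijection on support $\tau$-tilting pairs. To see $(-)^{\dag\dag} = \id$, observe that the maximal projective direct summand of $\Tr M_{\mathrm{np}} \oplus P^\ast$ is exactly $P^\ast$, because $\Tr M_{\mathrm{np}}$ computed from a minimal projective presentation has no projective summand; applying $\dag$ again therefore yields $(\Tr\Tr M_{\mathrm{np}} \oplus M_{\mathrm{p}}^{\ast\ast},\, P^{\ast\ast}) = (M, P)$ by (i) and (ii).

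The main obstacle is the simultaneous bookkeeping of the several orthogonality and $\tau$-rigidity conditions, because a single vanishing on the target side (say a cross-term in the $\tau$-rigidity of $\Tr M_{\mathrm{np}} \oplus P^\ast$) corresponds to a \emph{different} type of vanishing on the source side (a portion of $\Hom(P,M)=0$ or of the $\tau$-rigidity of $M$), so the dictionary between the two sides must be organized carefully. Once that dictionary is laid out, each individual verification is a routine application of tools (i)--(iii).
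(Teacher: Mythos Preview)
The paper does not give its own proof of this lemma; it is stated as a citation of \cite[Theorem~2.14]{AIR} (``The following bijections due to Adachi, Iyama and Reiten are quite essential in this paper''). So there is no in-paper argument to compare against.

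Your proposal is a correct reconstruction of the proof and is essentially the argument given in \cite{AIR}. The key identifications you list---$\Tr\Tr\cong\id$ on modules without projective summands, the involutive duality $(-)^\ast$ on projectives, and the adjunction isomorphism $\Hom_\Lambda(X,\tau Y)\cong\mathbb{D}(X\otimes_\Lambda \Tr Y)$---are exactly what drive the translation of each vanishing condition across $(-)^\dag$. Your bookkeeping of the cross-terms is accurate: the vanishing of $\Hom_{\Lambda^{\op}}(P^\ast,\tau_{\op}\Tr M_{\mathrm{np}})$ corresponds to $\Hom_\Lambda(P,M_{\mathrm{np}})=0$, the vanishing of $\Hom_{\Lambda^{\op}}(M_{\mathrm{p}}^\ast,\Tr M_{\mathrm{np}})$ corresponds to $\Hom_\Lambda(M_{\mathrm{p}},\tau M_{\mathrm{np}})=0$, and $\Hom_{\Lambda^{\op}}(M_{\mathrm{p}}^\ast,P^\ast)\cong\Hom_\Lambda(P,M_{\mathrm{p}})=0$. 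The observation that $\Tr M_{\mathrm{np}}$ (taken via a minimal presentation) has no projective summand is what makes $(-)^{\dag\dag}=\id$ go through cleanly. Nothing is missing.
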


Before giving the main theorem in this section, we introduce the following well-known lemma on Gorenstein projective modules. For convenience, we give a proof.
\begin{lemma}\label{3.3}
Let $\Lambda$ be an algebra.
 \begin{enumerate}[\rm(1)]
\item $(-)^*=\Hom_{\Lambda}(-,\Lambda): \mathcal{GP}(\Lambda) \rightarrow \mathcal{GP}(\Lambda^{\mathrm{op}})$ is a duality.

\item $\Omega^{1} : \underline{\mathcal{GP}(\Lambda)} \rightarrow \underline{\mathcal{GP}(\Lambda)}$ is an equivalence of categories.

\item $\Tr : \underline{\mathcal{GP}(\Lambda)} \rightarrow \underline{\mathcal{GP}(\Lambda^{\mathrm{op}})}$ is a duality.
\end{enumerate}

\end{lemma}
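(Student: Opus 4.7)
The plan is to verify each part using the complete projective resolution defining a Gorenstein projective module, together with the reflexivity characterisation recalled in Remark~\ref{2.3}(1). For (1), I would fix a complete projective resolution $\cdots \to P_{-1} \to P_0 \to P_1 \to \cdots$ with $M \simeq \Im(P_{-1}\to P_0)$ that stays exact under $(-)^{\ast}$. Applying $(-)^{\ast}$ termwise yields a complex $\cdots \to P_1^{\ast} \to P_0^{\ast} \to P_{-1}^{\ast} \to \cdots$ in $\mathcal{P}(\Lambda^{\op})$ which is exact by hypothesis, and redualising returns the original complex up to the natural isomorphism $P^{\ast\ast} \simeq P$, so the dualised complex also stays exact under the opposite $\Hom_{\Lambda^{\op}}(-,\Lambda)$. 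Reading off $M^{\ast}$ as the appropriate image exhibits it as an object of $\mathcal{GP}(\Lambda^{\op})$, and the natural isomorphism $M \simeq M^{\ast\ast}$ from Remark~\ref{2.3}(1) supplies a quasi-inverse, so $(-)^{\ast}$ is a duality.

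For (2), I would use the same resolution to take canonical representatives $\Omega M \simeq \Im(P_{-2}\to P_{-1})$ and $\Omega^{-1} M \simeq \Im(P_0 \to P_1)$; both inherit suitably shifted complete projective resolutions and hence lie in $\mathcal{GP}(\Lambda)$. A standard horseshoe-lemma argument, combined with Schanuel-type uniqueness, shows that these choices are independent up to projective summands, so $\Omega$ and $\Omega^{-1}$ descend to well-defined and mutually inverse functors on $\underline{\mathcal{GP}(\Lambda)}$.

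For (3), I would reduce to the combination of (1) and (2). Given a Gorenstein projective $M$ with projective presentation $P_1 \xrightarrow{d} P_0 \to M \to 0$, splicing the short exact sequences $0 \to \Omega M \to P_0 \to M \to 0$ and $0 \to \Omega^2 M \to P_1 \to \Omega M \to 0$, and using that $M$ and $\Omega M$ are Gorenstein projective (so $\Ext^1_{\Lambda}(-,\Lambda)$ vanishes on them), yields after applying $(-)^{\ast}$ the four-term exact sequence $0 \to M^{\ast} \to P_0^{\ast} \to P_1^{\ast} \to (\Omega^2 M)^{\ast} \to 0$. Comparing with the defining cokernel $\Tr M = \Coker(d^{\ast})$ identifies $\Tr M \simeq (\Omega^2 M)^{\ast}$, so on the stable categories $\Tr$ factors as $(-)^{\ast}\circ \Omega^2$, a composite of a duality and an auto-equivalence supplied by (1) and (2). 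The main point requiring care is the projective-summand ambiguity inherent in the choice of presentation versus a minimal one; passing to the stable category absorbs this ambiguity and delivers the stated duality $\underline{\mathcal{GP}(\Lambda)} \to \underline{\mathcal{GP}(\Lambda^{\op})}$.
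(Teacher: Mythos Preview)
Your argument is correct. Part~(1) matches the paper's proof essentially verbatim: dualise the complete projective resolution, observe that double-dualising recovers the original complex, and read off $M^{\ast}\in\mathcal{GP}(\Lambda^{\op})$ together with $M\simeq M^{\ast\ast}$.

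Parts~(2) and~(3) are correct but take different routes from the paper. For~(2), the paper argues that $\Omega^{1}$ is fully faithful on $\underline{\mathcal{GP}(\Lambda)}$ (using $\Ext^{1}_{\Lambda}(M,\Lambda)=0$ to see that $\Omega^{1}$ induces isomorphisms on stable $\Hom$) and dense (every Gorenstein projective module without projective summands is already a syzygy in its own complete resolution); you instead build the quasi-inverse $\Omega^{-1}$ explicitly from the right half of the complete resolution and invoke Schanuel-type uniqueness. Both are standard; the paper's version is slightly slicker, while yours makes the inverse concrete. For~(3), the paper simply invokes the symmetric characterisation in Remark~\ref{2.3}(1) (namely $M$ is Gorenstein projective iff $\Ext^{i}(M,\Lambda)=0=\Ext^{i}(\Tr M,\Lambda)$ for all $i\geq 1$), which immediately gives that $\Tr$ preserves Gorenstein projectivity since the condition is symmetric in $M$ and $\Tr M$; the duality then follows from the classical fact that $\Tr$ is a duality on the full stable module categories. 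You instead establish the stable factorisation $\Tr\simeq(-)^{\ast}\circ\Omega^{2}$ via the four-term exact sequence and then compose the duality from~(1) with the auto-equivalence from~(2). Your approach has the advantage of making the relationship among the three statements explicit and of giving a formula that the paper itself uses later (in the proof of Theorem~\ref{3.4}); the paper's approach is shorter because Remark~\ref{2.3}(1) already packages the hard work.
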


\begin{proof}
(1) Let $M\in \mathcal{GP}(\Lambda)$ and $\mathbf{P}: \cdots \rightarrow P_{-1} \rightarrow P_{0} \rightarrow P_{1} \rightarrow \cdots$ be a complete projective resolution of $M$. Then $\Hom_{\Lambda}(\mathbf{P},\Lambda)$ is an exact sequence of right projective $\Lambda$-modules, and $\Hom_{\Lambda}(\Hom_{\Lambda}(\mathbf{P},$$ _{\Lambda}\Lambda),\Lambda_{\Lambda}) \simeq \Hom_{\Lambda}(_{\Lambda}\Lambda,\mathbf{P})$. Thus $\Hom_{\Lambda}(\Hom_{\Lambda}(\mathbf{P},$$_{\Lambda}\Lambda),\Lambda_{\Lambda})$ is exact. By definition $\Hom_{\Lambda}(\mathbf{P},$$_{\Lambda}\Lambda)$ is a complete projective resolution of $\Hom_{\Lambda}(M,$$_{\Lambda}\Lambda)$ and $M \simeq \Hom_{\Lambda}(\Hom_{\Lambda}(M,$$_{\Lambda}\Lambda),\Lambda_{\Lambda})$ by Five Lemma. Thus $\Hom_{\Lambda}(M,$$_{\Lambda}\Lambda) \in \mathcal{GP}(\Lambda^{\mathrm{op}})$. Similarly, one can prove the other side.

(2) For $M\in \mathcal{GP}(\Lambda)$, $\Ext_{\Lambda}^{1}(M,\Lambda)=0$, we have that $\Omega^{1}: \Hom (\underline{M},\underline{N})\rightarrow \Hom (\underline{\Omega^{1} M},\underline{\Omega^{1} N})$ is an isomorphism for all $N \in \mod \Lambda$, so that $\Omega^{1} : \underline{\mathcal{GP}(\Lambda)} \rightarrow \underline{\mathcal{GP}(\Lambda)}$ is fully faithful. $\Omega^{1}$ is dense since a Gorenstein projective module without nonzero projective summands is an arbitrary syzygy.

(3) Let $M\in \underline{\mathcal{GP}(\Lambda)}$ admit no projective direct summands. Consider a minimal projective resolution $\cdots \rightarrow P_{1} \rightarrow P_{0} \rightarrow M \rightarrow 0$, we have the exact sequence $0 \rightarrow M^{*} \rightarrow P_{0}^{*} \rightarrow P_{1}^{*}\rightarrow \cdots$ and $P_{0}^{*} \rightarrow P_{1}^{*}\rightarrow \Tr M \rightarrow 0$, showing that $\Tr M \in \underline{\mathcal{GP}(\Lambda^{\mathrm{op}})}$ by Remark \ref{2.3}. One can prove the other side similarly.
\end{proof}
Now we can give the following bijection theorem on Gorenstein projective $\tau$-rigid modules.
\begin{theorem}\label{3.4}
Let $\Lambda$ be an algebra. There is a bijection between the following sets.
 \begin{enumerate}[\rm(1)]
\item The set of Gorenstein projective $\tau$-rigid pair in $\mod \Lambda$.
\item The set of Gorenstein projective $\tau$-rigid pair in $\mod \Lambda$$ ^{\mathrm{op}}$.
\item The set of Gorenstein injective $\tau^{-1}$-rigid pair in $\mod \Lambda$.
\item The set of Gorenstein injective $\tau^{-1}$-rigid pair in $\mod \Lambda$$ ^{\mathrm{op}}$.
\end{enumerate}
\end{theorem}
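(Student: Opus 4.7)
The plan is to upgrade two standard bijections to the Gorenstein setting: the $(-)^{\dag}$ functor of Lemma~\ref{3.2} (which yields (1)$\leftrightarrow$(2)) and the $K$-duality $\mathbb{D}$ of Remark~\ref{2.3}(4) (which yields (2)$\leftrightarrow$(3)); running these arguments on the opposite side also gives (3)$\leftrightarrow$(4) and (1)$\leftrightarrow$(4). At each step the task is to verify that the Gorenstein projective / Gorenstein injective property is preserved.

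For (1)$\leftrightarrow$(2), Lemma~\ref{3.2} already furnishes the bijection on $\tau$-rigid pairs via $(M,P)^{\dag}=(\Tr M\oplus P^{\ast},M_{\mathrm{p}}^{\ast})$, with $(-)^{\dag\dag}=\id$. Given $(M,P)$ in~(1), decompose $M=M_{\mathrm{p}}\oplus M_{\mathrm{np}}$; the summand $M_{\mathrm{np}}$ is Gorenstein projective by Remark~\ref{2.3}(2), so Lemma~\ref{3.3}(3) gives $\Tr M_{\mathrm{np}}\in\mathcal{GP}(\Lambda^{\mathrm{op}})$. Since $P^{\ast}\in\mathcal{P}(\Lambda^{\mathrm{op}})\subseteq\mathcal{GP}(\Lambda^{\mathrm{op}})$, another application of Remark~\ref{2.3}(2) yields $\Tr M\oplus P^{\ast}=\Tr M_{\mathrm{np}}\oplus P^{\ast}\in\mathcal{GP}(\Lambda^{\mathrm{op}})$, so $(M,P)^{\dag}$ lies in~(2). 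The symmetric argument, together with $(-)^{\dag\dag}=\id$, completes the restricted bijection.

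For (2)$\leftrightarrow$(3), the $K$-duality $\mathbb{D}$ sends projective $\Lambda^{\mathrm{op}}$-modules to injective $\Lambda$-modules (via the Nakayama functor $\nu=\mathbb{D}(-)^{\ast}$) and interchanges $\mathcal{GP}(\Lambda^{\mathrm{op}})$ with $\mathcal{GI}(\Lambda)$ by Remark~\ref{2.3}(4). Combined with the standard identity $\mathbb{D}\circ\tau_{\Lambda^{\mathrm{op}}}\cong\tau^{-1}_{\Lambda}\circ\mathbb{D}$ and the hom-adjunction $\Hom_{\Lambda}(\mathbb{D}N,\mathbb{D}Q)\cong\Hom_{\Lambda^{\mathrm{op}}}(Q,N)$, this shows that $(N,Q)\mapsto(\mathbb{D}Q,\mathbb{D}N)$ is a bijection (2)$\leftrightarrow$(3) that carries Gorenstein projective $\tau$-rigid pairs to Gorenstein injective $\tau^{-1}$-rigid pairs. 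The bijection (3)$\leftrightarrow$(4) arises as the $\tau^{-1}$-analog of (1)$\leftrightarrow$(2) (equivalently, as the $\mathbb{D}$-conjugate of $(-)^{\dag}$), so composing the three bijections yields the full four-way assembly.

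The main technical obstacle is the preservation of Gorenstein projectivity under $(-)^{\dag}$, and this reduces precisely to Lemma~\ref{3.3}(3): that $\Tr$ restricts to a duality $\underline{\mathcal{GP}(\Lambda)}\to\underline{\mathcal{GP}(\Lambda^{\mathrm{op}})}$. Once that is granted, the compatibility of $(-)^{\ast}$, of the maximal projective summand $M_{\mathrm{p}}$, and of the Nakayama functor $\nu$ with the various decompositions are standard duality facts for finite-dimensional algebras, and the four-way bijection falls out mechanically.
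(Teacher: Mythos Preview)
Your proposal is correct and follows essentially the same route as the paper: the bijection (1)$\leftrightarrow$(2) is the restriction of $(-)^{\dag}$ from Lemma~\ref{3.2}, with the Gorenstein condition handled via Lemma~\ref{3.3}, and (2)$\leftrightarrow$(3) is the $K$-duality $\mathbb{D}$. The only cosmetic difference is that the paper rederives $\Tr M\simeq(\Omega^{1}M)^{\ast}$ explicitly and then invokes Lemma~\ref{3.3}(1)--(2), whereas you cite Lemma~\ref{3.3}(3) directly; one small caution is that your appeal to Remark~\ref{2.3}(2) for the summand $M_{\mathrm{np}}$ being Gorenstein projective really uses closure under direct summands, which is standard but not literally stated there.
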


\begin{proof}
We only prove the bijection between $(1)$ and $(2)$ since the bijection between $(2)$ and $(3)$ can be showed by the duality $\mathbb{D}: \mathcal{GP}(\Lambda)\rightarrow \mathcal{GI}(\Lambda^{\mathrm{op}})$ and $\mathbb{D}$: $\tau$-rigid  modules in $\mod\Lambda \rightarrow$ $\tau^{-1}$-rigid modules in $\mod\Lambda^{\rm op}$. And the proof of the bijection between $(3)$ and $(4)$ is similar to $(1)$ and $(2)$.

Let $(M,P)$ be a $\tau$-rigid pair in $\mod \Lambda$.
Then $(M,P)$ is a $\tau$-rigid pair in $\mod \Lambda$ if and only if $(M,P)^{\dag}$ is a $\tau$-rigid pair in $\mod \Lambda$$ ^{\mathrm{op}}$.
$M$ is Gorenstein projective module provide that $M \simeq M^{\ast\ast}$ and $\Ext_{\Lambda}^{i}(M,\Lambda)=\Ext_{\Lambda}^{i}(M^{\ast},\Lambda)=0$.
For a Gorenstein projective module $M$, there is a $\Hom_{\Lambda}(-,\Lambda)$ exact exact sequence
$$\cdots \rightarrow P_{-2} \rightarrow P_{-1} \rightarrow P_{0} \rightarrow P_{1} \rightarrow P_{2} \rightarrow \cdots $$
of projective modules such that $M=\Im (P_{-1}\rightarrow P_{0})$.
Applying $(-)^{\ast}$ to above exact sequence, we have
$$0 \rightarrow M^{\ast} \rightarrow P_{-1}^{\ast} \rightarrow P_{-2}^{\ast} \rightarrow (\Omega^{1} M)^{\ast} \rightarrow 0$$
By Lemma \ref{3.3} we have $\Omega^{1} M$ is Gorenstein projective since $M$ is Gorenstein projective. Thus $\Tr M \simeq (\Omega^{1} M)^{\ast}$ is a Gorenstein projective $\Lambda$$ ^{\mathrm{op}}$-module and $\Tr M \bigoplus P^{\ast}=M^{\dagger}$ is a Gorenstein projective $\Lambda$$ ^{\mathrm{op}}$-module.

Conversely, if $(M,P)$ is a $\tau$-rigid pair in $\mod \Lambda^{\mathrm{op}}$, the proof is similar.

Thus, we have a bijection between Gorenstein projective $\tau$-rigid pairs in $\mod \Lambda$ and Gorenstein projective $\tau^{-1}$-rigid pairs in $\mod\Lambda^{\rm op}$.
\end{proof}

Applying to Gorenstein projective support $\tau$-tilting pairs, we have

\begin{theorem}\label{3.7}Let $\Lambda$ be an algebra. There is a bijection between the following sets.
 \begin{enumerate}[\rm(1)]
\item The set of Gorenstein projective support $\tau$-tilting pair in $\mod \Lambda$.
\item The set of Gorenstein projective support $\tau$-tilting pair in $\mod \Lambda$$ ^{\mathrm{op}}$.
\item The set of Gorenstein injective support $\tau^{-1}$-tilting pair in $\mod \Lambda$.
\item The set of Gorenstein injective support $\tau^{-1}$-tilting pair in $\mod \Lambda$$ ^{\mathrm{op}}$.
\end{enumerate}
\end{theorem}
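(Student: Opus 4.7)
The plan is to observe that Theorem \ref{3.4} already provides the corresponding bijections on the larger set of Gorenstein projective $\tau$-rigid pairs (and its three variants), implemented by the operators $(-)^{\dagger}$ from Lemma \ref{3.2} and the duality $\mathbb{D}$. To obtain Theorem \ref{3.7}, I would show that these same operators restrict correctly to support $\tau$-tilting pairs, i.e.\ that each of them preserves the numerical condition $|M|+|P|=|\Lambda|$ (or its $\tau^{-1}$ counterpart) in addition to the Gorenstein projective/injective property already handled in the proof of Theorem \ref{3.4}.

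For the bijection $(1)\leftrightarrow(2)$, given a Gorenstein projective support $\tau$-tilting pair $(M,P)$ in $\mod\Lambda$, apply $(-)^{\dagger}$ to get $(\Tr M\oplus P^{*},M_{\mathrm p}^{*})$ in $\mod\Lambda^{\op}$. By Lemma \ref{3.2}(2) this is a support $\tau$-tilting pair, and by Theorem \ref{3.4} the first component is Gorenstein projective over $\Lambda^{\op}$. Since $(-)^{\dagger\dagger}=\id$, this is an involution and hence a bijection. For $(2)\leftrightarrow(3)$, I would use $\mathbb{D}$: by Remark \ref{2.3}(4), $\mathbb{D}$ restricts to a duality $\mathcal{GP}(\Lambda^{\op})\to\mathcal{GI}(\Lambda)$; it exchanges projective and injective modules and intertwines $\tau$ and $\tau^{-1}$ (this is the standard bijection $\mathbb{D}: \{\tau\text{-rigid over }\Lambda^{\op}\}\to\{\tau^{-1}\text{-rigid over }\Lambda\}$ recalled in the proof of Theorem \ref{3.4}); moreover $\mathbb{D}$ preserves the number of indecomposable summands and sends $\Lambda^{\op}$ to $\mathbb{D}\Lambda$ with $|\mathbb{D}\Lambda|=|\Lambda^{\op}|=|\Lambda|$, so the support tilting count is preserved. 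Finally, $(3)\leftrightarrow(4)$ is the exact $\tau^{-1}$-analogue of $(1)\leftrightarrow(2)$, obtained by a Gorenstein injective version of $(-)^{\dagger}$ (equivalently, by composing $\mathbb{D}^{-1}$, the $(-)^{\dagger}$ of $(1)\leftrightarrow(2)$, and $\mathbb{D}$).

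There is essentially no hard step: once Theorem \ref{3.4} and Lemma \ref{3.2} are in place, the only thing to check is that each operator respects the numerical equality $|M|+|P|=|\Lambda|$. The mildest care is needed in $(2)\leftrightarrow(3)$, where one must verify that $\mathbb{D}$ takes support $\tau$-tilting pairs to support $\tau^{-1}$-tilting pairs (this uses that $\mathbb{D}$ is an exact duality that interchanges $\mathcal{P}(\Lambda^{\op})$ and $\mathcal{I}(\Lambda)$ and intertwines $\tau_{\Lambda^{\op}}$ with $\tau^{-1}_{\Lambda}$). With these routine verifications, composing the three bijections $(-)^{\dagger}$, $\mathbb{D}$, $(-)^{\dagger}$ yields all six bijections among $(1),(2),(3),(4)$.
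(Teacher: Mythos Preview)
Your proposal is correct and follows essentially the same approach as the paper: both use the operator $(-)^{\dagger}$ from Lemma~\ref{3.2}(2) together with Theorem~\ref{3.4} to handle $(1)\leftrightarrow(2)$, and the duality $\mathbb{D}$ (composed with $(-)^{\dagger}$) for the remaining bijections. The only cosmetic difference is that the paper routes the argument as $(1)\leftrightarrow(2)$, $(1)\leftrightarrow(3)$, $(2)\leftrightarrow(4)$ rather than your chain $(1)\leftrightarrow(2)\leftrightarrow(3)\leftrightarrow(4)$, but the underlying maps are identical.
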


\begin{proof} $(1)\Leftrightarrow(2)$ Let $(M,P)$ be a Gorenstein projective support $\tau$-tilting pair in $\mod\Lambda$.  By Theorem \ref{3.4},
then $(M,P)^{\dag}$ is a Gorenstein projective $\tau$-rigid pair in $\mod\Lambda^{\rm op}$. Then by Lemma \ref{3.2}(2), $(M,P)^{\dag}$ is a support $\tau$-tilting pair. One gets the assertion easily since $(-)^{\dag}$ is a duality.

$(1)\Leftrightarrow(3)$  Let $(M,P)$ be a Gorenstein projective support $\tau$-tilting pair in $\mod\Lambda$. One gets a bijective map via $(M,P)\rightarrow \mathbb{D}(M,P)^{\dag}$ since both $(-)^{\dag}$ and $\mathbb{D}$ are dualities.

$(2)\Leftrightarrow(4)$ is similar to $(1)\Leftrightarrow(3)$.
We are done.
\end{proof}
Then we have the following corollary on Gorenstein projective $\tau$-tilting modules.
\begin{corollary}\label{3.5}
Let $\Lambda$ be an algebra and let $M\in \mod\Lambda$ have no projective direct summand. Then the following are equivalent.
\begin{enumerate}[\rm (1)]
\item $M$ is a Gorenstein projective $\tau$-tilting $\Lambda$-module.
\item $M^{\dag}(=\Tr M)$ is a Gorenstein projective $\tau$-tilting $\Lambda^{\mathrm{op}}$-module.
 \item $\tau M$ is a Gorenstein injective $\tau^{-1}$-tilting $\Lambda$-module.
\end{enumerate}
\end{corollary}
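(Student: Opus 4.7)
The plan is to deduce this corollary directly from Theorem \ref{3.7} by specializing to pairs whose projective (or injective) component is trivial. The crucial observation is that for a module $M$ with no projective direct summand, being a Gorenstein projective $\tau$-tilting module is equivalent to $(M,0)$ being a Gorenstein projective support $\tau$-tilting pair in $\mod \Lambda$: the $\tau$-tilting condition $|M|=|\Lambda|$ forces the idempotent companion $P$ in any support $\tau$-tilting pair $(M,P)$ to satisfy $|P|=0$, hence $P=0$.

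For $(1) \Leftrightarrow (2)$, I would apply the bijection $(-)^{\dag}$ from Theorem \ref{3.7}. Since $M$ has no projective direct summand, $M_{\mathrm{p}}=0$ and $M=M_{\mathrm{np}}$, so
\[
(M,0)^{\dag} = (\Tr M_{\mathrm{np}} \oplus 0^{*},\, M_{\mathrm{p}}^{*}) = (\Tr M,\, 0).
\]
Hence $(M,0)$ is a Gorenstein projective support $\tau$-tilting pair in $\mod \Lambda$ if and only if $(\Tr M, 0)$ is one in $\mod \Lambda^{\mathrm{op}}$. Combining this with the standard fact that $\Tr M$ has no projective direct summand (a basic property of the Auslander--Bridger transpose on modules with no projective summands), the pair $(\Tr M, 0)$ unpacks precisely to $\Tr M$ being a Gorenstein projective $\tau$-tilting $\Lambda^{\mathrm{op}}$-module, matching the identification $M^{\dag} = \Tr M$ stated in the corollary.

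For $(1) \Leftrightarrow (3)$, I would compose $(-)^{\dag}$ with the duality $\mathbb{D}$ as in the proof of Theorem \ref{3.7}. The pair $(M,0)$ is sent first to $(\Tr M, 0)$ in $\mod \Lambda^{\mathrm{op}}$ and then, under $\mathbb{D}$, to the Gorenstein injective support $\tau^{-1}$-tilting pair $(0,\, \tau M)$ in $\mod \Lambda$, using the standard identification $\mathbb{D}\,\Tr M \cong \tau M$ that defines the Auslander--Reiten translation. Since the injective component is zero, this is exactly the statement that $\tau M$ is a Gorenstein injective $\tau^{-1}$-tilting $\Lambda$-module.

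The argument is essentially bookkeeping once Theorem \ref{3.7} is in hand, so there is no real obstacle. The only point requiring care is to track the no-projective-summand hypothesis through both the transpose and the duality, ensuring in each case that the image module is genuinely $\tau$-tilting (respectively $\tau^{-1}$-tilting) rather than merely of support type; this is handled by the equalities $M_{\mathrm{p}}=0$ and $P=0$ in the relevant components of the pair.
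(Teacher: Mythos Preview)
Your proposal is correct and follows essentially the same approach as the paper: both reduce to Theorem \ref{3.7} by viewing $M$ as the pair $(M,0)$, compute $(M,0)^{\dag}=(\Tr M,0)$, and then pass to $(0,\tau M)$ via $\mathbb{D}$. The only cosmetic difference is that the paper cites \cite[Proposition 2.2]{AIR} (sincere support $\tau$-tilting modules are $\tau$-tilting) to recover the $\tau$-tilting condition on $\Tr M$, whereas you read it off directly from the fact that the projective component of the pair is zero; your bookkeeping is arguably cleaner.
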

\begin{proof}
$M=M_{\mathrm{np}}$ and $(M,0)$ is a $\tau$-rigid pair since $M$ has no projective direct summand. So $M^{\dag}=\Tr M$. $M$ is a $\tau$-rigid $\Lambda$-module if and only if $\Tr M$ is a $\tau$-rigid $\Lambda^{\mathrm{op}}$-module. By \cite[Proposition 2.2]{AIR} sincere support $\tau$-tilting modules are $\tau$-tilting, we can get that $M$ is a $\tau$-tilting $\Lambda$-module if and only if $\Tr M$ is a $\tau$-tilting $\Lambda^{\mathrm{op}}$-module. Then we can get the assertion by Theorem \ref{3.7}.
\end{proof}
Next, we give an example to illustrate the Gorenstein projective $\tau$-tilting modules without projective direct summands do exist! Applying the functor $\tau$, one can get the Gorenstein injective $\tau^{-1}$-tilting modules without injective direct summands in $\mod\Lambda$.
\begin{example}\label{3.6}
Let $\Lambda$ be an algebra given by the quiver $Q$: $\xymatrix{1\ar@<.2em>[r]^{a_{1}}&2\ar@<.2em>[l]^{b_{2}}\ar@<.2em>[r]^{a_{2}}&3\ar@<.2em>[l]^{b_{1}}}$ with the relations $a_{1}b_{2}=a_{2}b_{1}=0$ and $b_{1}a_{2}=b_{2}a_{1}$.
The  support $\tau$-tilting quiver of $\Lambda$ is
$$\begin{xy}
0;<3.4pt,0pt>:<0pt,2.5pt>::
(0,0)
*+{ \left[\begin{smallmatrix} 1\\ &2\\
&&3\end{smallmatrix}\middle| \begin{smallmatrix} &2\\ 1&&3\\
&2\\ \end{smallmatrix}\middle| \begin{smallmatrix} &&3\\ &2\\
1\\ \end{smallmatrix}\right]}="1", (-30,-16)
*+{\left[\begin{smallmatrix} \\ 2\\
&3\end{smallmatrix}\middle| \begin{smallmatrix} &2\\ 1&&3\\
&2\\ \end{smallmatrix}\middle| \begin{smallmatrix} &&3\\ &2\\
1\\ \end{smallmatrix}\right]}="2", (0,-16)
*+{\left[\begin{smallmatrix} 1\\ &2\\
&&3\end{smallmatrix}\middle| \begin{smallmatrix} 1&&3\\
&2\\ \end{smallmatrix}\middle| \begin{smallmatrix} &&3\\ &2\\
1\\ \end{smallmatrix}\right]}="3", (30,-16)
*+{\left[\begin{smallmatrix} 1\\ &2\\
&&3\end{smallmatrix}\middle| \begin{smallmatrix} &2\\ 1&&3\\
&2\\ \end{smallmatrix}\middle| \begin{smallmatrix} &2\\
1\end{smallmatrix}\right]}="4", (-50,-32)
*+{\left[\begin{smallmatrix} 2\\ &3\end{smallmatrix}\middle|
\begin{smallmatrix} &3\\ 2\\ \end{smallmatrix}\middle|
\begin{smallmatrix} &&3\\ &2\\1\\ \end{smallmatrix}\right]}="5", (-25,-32)
*+{\left[\begin{smallmatrix} 3\end{smallmatrix}\middle|
\begin{smallmatrix} 1&&3\\ &2\\ \end{smallmatrix}\middle|
\begin{smallmatrix} &&3\\ &2\\
1\\ \end{smallmatrix}\right]}="6", (0,-32)
*+{\left[\begin{smallmatrix} \\ 2\\
&3\end{smallmatrix}\middle| \begin{smallmatrix} &2\\ 1&&3\\
&2\\ \end{smallmatrix}\middle| \begin{smallmatrix} &2\\
1\end{smallmatrix}\right]}="7", (25,-32)
*+{\left[\begin{smallmatrix} 1\\ &2\\
&&3\end{smallmatrix}\middle| \begin{smallmatrix} 1&&3\\
&2\\ \end{smallmatrix}\middle| \begin{smallmatrix}
1\end{smallmatrix}\right]}="8", (50,-32)
*+{\left[\begin{smallmatrix} 1\\ &2\\
&&3\end{smallmatrix}\middle| \begin{smallmatrix} 1\\
&2\end{smallmatrix}\middle| \begin{smallmatrix} &2\\
1\end{smallmatrix}\right]}="9", (-55,-48)
*+{\left[\begin{smallmatrix} 2\\ &3\end{smallmatrix}\middle|
\begin{smallmatrix} &3\\ 2\\ \end{smallmatrix}\middle|\ \right]}="10",
(-35,-48)
*+{\left[\begin{smallmatrix}
3\end{smallmatrix}\middle| \begin{smallmatrix} &3\\
2\\ \end{smallmatrix}\middle| \begin{smallmatrix} &&3\\
&2\\1\\ \end{smallmatrix}\right]}="11", (-10,-48)
*+{\left[\begin{smallmatrix}
3\end{smallmatrix}\middle| \begin{smallmatrix} 1&&3\\
&2\\ \end{smallmatrix}\middle| \begin{smallmatrix}
1\end{smallmatrix}\right]}="12", (10,-48)
*+{\left[\begin{smallmatrix} \\ 2\\
&3\end{smallmatrix}\middle| \begin{smallmatrix}
2\end{smallmatrix}\middle| \begin{smallmatrix} &2\\
1\end{smallmatrix}\right]}="13", (35,-48)
*+{\left[\begin{smallmatrix} 1\\ &2\\
&&3\end{smallmatrix}\middle| \begin{smallmatrix} 1\\
&2\end{smallmatrix}\middle| \begin{smallmatrix}
1\end{smallmatrix}\right]}="14", (55,-48)
*+{\left[\ \middle|\begin{smallmatrix} 1\\ &2\end{smallmatrix}\middle|
\begin{smallmatrix} &2\\ 1\end{smallmatrix}\right]}="15",
(-50,-64)
*+{\left[\begin{smallmatrix}
3\end{smallmatrix}\middle| \begin{smallmatrix} &3\\
2\\ \end{smallmatrix}\middle|\ \right]}="16", (-25,-64)
*+{\left[\begin{smallmatrix} 2\\
&3\end{smallmatrix}\middle| \begin{smallmatrix}
2\end{smallmatrix}\middle|\ \right]}="17", (-0,-64)
*+{\left[\begin{smallmatrix} 3\end{smallmatrix}\middle|\ \middle|
\begin{smallmatrix} 1\end{smallmatrix}\right]}="18", (25,-64)
*+{\left[\ \middle|\begin{smallmatrix}
2\end{smallmatrix}\middle| \begin{smallmatrix} &2\\
1\end{smallmatrix}\right]}="19", (50,-64)
*+{\left[\ \middle|\begin{smallmatrix} 1\\
&2\end{smallmatrix}\middle| \begin{smallmatrix}
1\end{smallmatrix}\right]}="20", (-30,-80)
*+{\left[\begin{smallmatrix}
3\end{smallmatrix}\middle|\ \middle|\ \right]}="21", (0,-80)
*+{\left[\ \middle|\begin{smallmatrix}
2\end{smallmatrix}\middle|\ \right]}="22", (30,-80)
*+{\left[\ \middle|\ \middle|\begin{smallmatrix}
1\end{smallmatrix}\right]}="23", (0,-96)
*+{\left[\ \middle|\ \middle|\ \right]}="24",  \ar"1";"2",
\ar"1";"3", \ar"1";"4", \ar"2";"5", \ar"2";"7", \ar"3";"6",
\ar"3";"8", \ar"4";"7", \ar"4";"9", \ar"5";"10", \ar"5";"11",
\ar"6";"11", \ar"6";"12", \ar"7";"13", \ar"8";"12", \ar"8";"14",
\ar"9";"14", \ar"9";"15", \ar"10";"16", \ar"10";"17", \ar"11";"16",
\ar"12";"18", \ar"13";"17", \ar"13";"19", \ar"14";"20",
\ar"15";"19", \ar"15";"20", \ar"16";"21", \ar"17";"22",
\ar"18";"21", \ar"18";"23", \ar"19";"22", \ar"20";"23",
\ar"21";"24", \ar"22";"24", \ar"23";"24", \end{xy}$$

The middle two modules in the fourth line are Gorenstein projective $\tau$-tilting $\Lambda$-modules without projective direct summands.
 \end{example}

\section{Gorenstein torsion pairs}

In this section, we give the definition of Gorenstein torsion pairs and then show the bijections between Gorenstein projective support $\tau$-tilting modules and Gorenstein torsion pairs.

To give the main results in this section, we give the following definition of Gorenstein torsion pairs.
\begin{definition}\label{5.1} Let $\Lambda$ be an algebra and let $(\mathcal{T},\mathcal{F})$ be a functorially finite torsion pair in $\mod\Lambda$. We call $(\mathcal{T},\mathcal{F})$ Gorenstein if the basic Ext-projective generator $M$ in $\mathcal{T}$ is Gorenstein projective. In this case, $\mathcal{T}$ is called a {\it Gorenstein projective} torsion class. Moreover, we call $(\mathcal{T},\mathcal{F})$ {\it trivial} if the basic Ext-projective generator $M$ is projective.
\end{definition}

Now we give an example to show that there are a lot of Gorenstein torsion pairs.

\begin{example}\label{5.2}Let $\Lambda$ be an algebra.
\begin{enumerate}[\rm (1)]
 \item Both $(\mod\Lambda,0)$ and $(0,\mod\Lambda)$ are functorially finite Gorenstein torsion pairs in $\mod\Lambda$.
 \item If $\Lambda$ is self-injective and $T$ is a $\tau$-tilting module in $\mod\Lambda$, then $(\Fac T, \Sub\tau T)$ is a functorially finite Gorenstein torsion pair.
 \end{enumerate}
\end{example}
We also give the following property of functorially finite Gorenstein torsion pairs.
\begin{proposition}\label{5.3} Let $\Lambda$ be an algebra and let $(\mathcal{T},\mathcal{F})$ be a functorially finite torsion pair in $\mod\Lambda$.
\begin{enumerate}[\rm(1)]
\item $(\mathcal{T},\mathcal{F})$ is Gorenstein if and only if the basic Ext-injective cogenerator in $\mathcal{F}$ is Gorenstein injective.
\item $(\mathcal{T},\mathcal{F})$ is trivial if and only if every Ext-injective object in $\mathcal{F}$ is injective.
\item $(\mathcal{T},\mathcal{F})$ is Gorenstein if and only if $(\mathbb{D}\mathcal{F},\mathbb{D}\mathcal{T})$ is Gorenstein in $\mod\Lambda^{\rm op}$.
\end{enumerate}
\end{proposition}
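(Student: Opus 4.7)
The plan is to reduce all three parts to the bijection of Theorem \ref{3.7} between Gorenstein projective support $\tau$-tilting pairs and Gorenstein injective support $\tau^{-1}$-tilting pairs, combined with the torsion pair description from Lemma \ref{2.7}. First I would fix the support $\tau$-tilting pair $(M,P)$ corresponding to $(\mathcal{T},\mathcal{F})$, so that $M$ is the basic Ext-projective generator of $\mathcal{T}=\Fac M$ and, by Lemma \ref{2.7}, the basic Ext-injective cogenerator of $\mathcal{F}=\Sub(\tau M\oplus\nu P)$ is $\tau M\oplus\nu P$. Tracing the assignment $(M,P)\mapsto\mathbb{D}(M,P)^{\dag}$ used in the proof of Theorem \ref{3.7}, and using the standard identities $\tau=\mathbb{D}\Tr$ and $\nu=\mathbb{D}(-)^{\ast}$, I would show that the associated support $\tau^{-1}$-tilting pair is $(\nu M_{\mathrm{p}},\tau M\oplus\nu P)$, whose second component is precisely the Ext-injective cogenerator of $\mathcal{F}$.

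For part (1), with this identification in hand Theorem \ref{3.7} immediately yields that $M$ is Gorenstein projective if and only if $\tau M\oplus\nu P$ is Gorenstein injective, which is the claim. For part (2), if $M$ is projective then $\tau M=0$, so the basic cogenerator reduces to $\nu P\in\mathcal{I}(\Lambda)$ and every Ext-injective object of $\mathcal{F}$, being a summand of $\nu P$, is injective; conversely, if $\tau M\oplus\nu P$ is injective then so is $\tau M$, but $\tau M$ has no nonzero injective direct summands, forcing $\tau M=0$ and hence $M$ projective.

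For part (3), I would invoke that $\mathbb{D}$ sends a torsion pair in $\mod\Lambda$ to a torsion pair in $\mod\Lambda^{\rm op}$ by swapping the two classes; in particular $(\mathbb{D}\mathcal{F},\mathbb{D}\mathcal{T})$ is a functorially finite torsion pair in $\mod\Lambda^{\rm op}$, and $\mathbb{D}$ carries the Ext-injective cogenerator $N=\tau M\oplus\nu P$ of $\mathcal{F}$ to the basic Ext-projective generator $\mathbb{D}N$ of $\mathbb{D}\mathcal{F}$. Since by Remark \ref{2.3}(4) the functor $\mathbb{D}$ is a duality between $\mathcal{GP}(\Lambda)$ and $\mathcal{GI}(\Lambda^{\rm op})$ (and, symmetrically, between $\mathcal{GI}(\Lambda)$ and $\mathcal{GP}(\Lambda^{\rm op})$), the module $\mathbb{D}N$ is Gorenstein projective in $\mod\Lambda^{\rm op}$ if and only if $N$ is Gorenstein injective in $\mod\Lambda$; combining this with part (1) completes the equivalence.

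The main obstacle I expect is the careful bookkeeping required at the outset: verifying that the second component of $\mathbb{D}(M,P)^{\dag}$ really coincides with the module $\tau M\oplus\nu P$ prescribed by Lemma \ref{2.7}, and that the duality $\mathbb{D}$ genuinely interchanges the roles of Ext-projective generator and Ext-injective cogenerator under the correspondence between torsion pairs in $\mod\Lambda$ and $\mod\Lambda^{\rm op}$. Once these compatibilities are pinned down, all three statements follow mechanically from Theorem \ref{3.7} and Remark \ref{2.3}(4).
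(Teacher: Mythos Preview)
Your proposal is correct and follows essentially the same route as the paper: fix the support $\tau$-tilting pair $(M,P)$ with $\mathcal{T}=\Fac M$ and $\mathcal{F}=\Sub(\tau M\oplus\nu P)$ via Lemma~\ref{2.7}, then use that $M$ is Gorenstein projective if and only if $\tau M\oplus\nu P$ is Gorenstein injective, from which all three parts follow. The only cosmetic difference is that the paper cites Lemma~\ref{3.3} directly for this last equivalence, whereas you invoke the packaged bijection of Theorem~\ref{3.7}; since Theorem~\ref{3.7} is itself proved via Lemma~\ref{3.3}, the two arguments unwind to the same computation.
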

\begin{proof} (1) $\Rightarrow$ Since $(\mathcal{T},\mathcal{F})$ is functorially finite, then there is an Ext-projective generator $T\in\mathcal{T}$ such that $\mathcal{T}=\Fac T$. By Theorem \ref{2.8}, $T$ is a support $\tau$-tilting module. Let $(T,P)$ be the support $\tau$-tilting pair given by $T$. By Lemma \ref{2.7}, one gets a functorially finite torsion pair $(\Fac T, \Sub(\tau T\oplus \nu P))$ which coincides with $(\mathcal{T},\mathcal{F})$. Since $(\mathcal{T},\mathcal{F})$ is functorially finite Gorenstein torsion pair, then $T$ is Gorenstein projective. By Lemma \ref{3.3}, $\tau T\oplus \nu P$ is Gorenstein injective which is an Ext-injective cogenerator in $\mathcal{F}$.

$\Leftarrow$ Since $(\mathcal{T},\mathcal{F})$ is functorially finite, then there is an Ext-injective module $M\in\mathcal{F}$ such that $\mathcal{F}=\Sub M$. By Theorem \ref{2.8}, $M$ is a support $\tau^{-1}$-tilting module. Similarly, one gets a support $\tau$-tilting pair $(T,P)$ in $\mod \Lambda$ such that $M\simeq \tau T\oplus \nu P$. Since $M$ is Gorenstein injective, then $T$ is Gorenstein projective by Lemma \ref{3.3}.

(2) One can get the assertion by a similar proof to (1).

(3) This is a straight result of (1).
\end{proof}

As a corollary, we can give the following characterizations of algebras.

\begin{corollary}\label{5.4} Let $\Lambda$ be an algebra.
\begin{enumerate}[\rm (1)]
\item If $\Lambda$ is of finite global dimension, then all functorially finite Gorenstein torsion pairs are trivial.
\item If $\Lambda$ is self-injective, then every functorially finite torsion pair is Gorenstein.
\end{enumerate}
\end{corollary}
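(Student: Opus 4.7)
The plan is to reduce both parts to a single structural fact: a Gorenstein projective module of finite projective dimension is projective, while over a self-injective algebra every module is Gorenstein projective. Given Definition \ref{5.1}, the assertions then follow immediately from what ``Gorenstein'' and ``trivial'' mean for the basic Ext-projective generator.

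For (1), let $(\mathcal{T},\mathcal{F})$ be a functorially finite Gorenstein torsion pair and let $M$ be the basic Ext-projective generator of $\mathcal{T}$. By assumption $M$ is Gorenstein projective, so by Remark \ref{2.3}(1) we have $\Ext_{\Lambda}^{i}(M,\Lambda)=0$ for all $i\geq 1$. Since $\gldim\Lambda<\infty$, the module $M$ has finite projective dimension, say $\pd_{\Lambda}M=n$. If $n\geq 1$, then the $n$-th syzygy of $M$ is projective but nonzero, so $\Ext_{\Lambda}^{n}(M,\Lambda)\neq 0$, contradicting $n\geq 1$. Hence $\pd_{\Lambda}M=0$, i.e.\ $M$ is projective, and $(\mathcal{T},\mathcal{F})$ is trivial by Definition \ref{5.1}.

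For (2), assume $\Lambda$ is self-injective, so $\Lambda\simeq\mathbb{D}\Lambda$ as $\Lambda$-bimodules and every projective $\Lambda$-module is injective. The plan is to observe that a minimal projective resolution $\cdots\to P_{1}\to P_{0}\to M\to 0$ of any $M\in\mod\Lambda$ can be spliced with a minimal injective resolution; since projectives and injectives coincide, this yields a doubly infinite exact sequence of projective modules realising $M$ as a syzygy. Applying $\Hom_{\Lambda}(-,\Lambda)$ again produces an exact sequence (because $\Lambda$ is injective on both sides), so this sequence is a complete projective resolution and $M\in\mathcal{GP}(\Lambda)$. Thus $\mathcal{GP}(\Lambda)=\mod\Lambda$; in particular, the basic Ext-projective generator of any functorially finite torsion pair is automatically Gorenstein projective, so the torsion pair is Gorenstein.

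Neither part is likely to present a real obstacle; the only mild subtlety is recording the standard fact used in (1) (Gorenstein projective plus finite projective dimension implies projective), which I would either cite from the Gorenstein homological algebra literature or derive in one line from Remark \ref{2.3}(1) as above. Part (2) is essentially a restatement of the fact that self-injective algebras are $0$-Gorenstein and therefore have $\mathcal{GP}(\Lambda)=\mod\Lambda$.
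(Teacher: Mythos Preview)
Your approach is the same as the paper's: both parts reduce to the standard facts that a Gorenstein projective module of finite projective dimension is projective, and that over a self-injective algebra every module is Gorenstein projective. The paper simply asserts these facts in one line each, while you supply more detail.

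Two minor inaccuracies to clean up. In part~(1), the step ``the $n$-th syzygy is projective but nonzero, so $\Ext_{\Lambda}^{n}(M,\Lambda)\neq 0$'' is true over Artin algebras but not for the reason implicit in your phrasing; the clean argument is that $\pd M=n$ gives $\Ext^{n}(M,S)\neq 0$ for some simple $S$, and since $\Ext^{n+1}(M,-)=0$, the surjection $P(S)\twoheadrightarrow S$ shows $\Ext^{n}(M,P(S))\neq 0$, hence $\Ext^{n}(M,\Lambda)\neq 0$. Alternatively, use $\Ext^{1}(\Omega^{n-1}M,\Omega^{n}M)=\Ext^{n}(M,\Omega^{n}M)=0$ to split the last short exact sequence. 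In part~(2), self-injective only gives $\Lambda\simeq\mathbb{D}\Lambda$ as one-sided modules, not as bimodules (the latter is the symmetric condition); fortunately you only use that projectives and injectives coincide, so the argument is unaffected.
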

\begin{proof}(1) For any functorially finite torsion pair $(\mathcal{T},\mathcal{F})$, then $\mathcal{T}\simeq \Fac M$ with $M$ Ext-projective in $\mathcal{T}$. If $(\mathcal{T},\mathcal{F})$ is Gorenstein, then $M$ is Gorenstein projective. Since $\Lambda$ is of finite projective dimension, then $M$ is projective.

(2) Since $\Lambda$ is self-injective, one gets that every module in $\mod\Lambda$ is Gorenstein projective. The assertion holds.
\end{proof}

We should remark that the converse of Corollary \ref{5.4}(1) is not true in general since all functorially finite Gorenstein torsion pairs over a CM-free algebra are always trivial. Moreover, the converse of Corollary \ref{5.4}(2) fails in general since the unique functorially finite torsion pair over a local algebra is trivial.

Now we are in a position to show the following main result.

\begin{theorem}\label{5.5} Let $\Lambda$ be an algebra. Then there is a bijection between the following sets.
\begin{enumerate}[\rm(1)]
\item The set of Gorenstein projective support $\tau$-tilting modules.
\item The set of functorially finite Gorenstein torsion pairs.
\item The set of Gorenstein injective support $\tau^{-1}$-tilting modules.
\end{enumerate}
\end{theorem}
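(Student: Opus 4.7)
The plan is to deduce Theorem \ref{5.5} directly from the AIR bijection (Theorem \ref{2.8}) by tracking the Gorenstein condition through each correspondence, so that no new input beyond Proposition \ref{5.3} is required.

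First I would recall the shape of the AIR correspondence: given a support $\tau$-tilting module $T$ with support $\tau$-tilting pair $(T,P)$, Lemma \ref{2.7} associates to $(T,P)$ the functorially finite torsion pair $(\Fac T,\Sub(\tau T\oplus\nu P))$. Under this assignment $T$ is the basic Ext-projective generator of $\Fac T$, while $\tau T\oplus\nu P$ is simultaneously the basic Ext-injective cogenerator of $\Sub(\tau T\oplus\nu P)$ and the support $\tau^{-1}$-tilting module corresponding to $T$ under Theorem \ref{2.8}.

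For the bijection $(1)\Leftrightarrow(2)$, the argument is essentially tautological. By Definition \ref{5.1}, the functorially finite torsion pair produced from $T$ is Gorenstein exactly when its basic Ext-projective generator is Gorenstein projective; since that generator is $T$ itself, the AIR bijection restricts to a bijection between Gorenstein projective support $\tau$-tilting modules and functorially finite Gorenstein torsion pairs.

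For the bijection $(2)\Leftrightarrow(3)$, I would invoke Proposition \ref{5.3}(1): a functorially finite torsion pair $(\mathcal{T},\mathcal{F})$ is Gorenstein if and only if the basic Ext-injective cogenerator of $\mathcal{F}$ is Gorenstein injective. Combining this with the AIR identification of the cogenerator $\tau T\oplus\nu P$ as the associated support $\tau^{-1}$-tilting module, the AIR correspondence restricts to a bijection between functorially finite Gorenstein torsion pairs and Gorenstein injective support $\tau^{-1}$-tilting modules. The only substantive content, namely the equivalence between Gorenstein projectivity of the Ext-projective generator and Gorenstein injectivity of the Ext-injective cogenerator, has already been dispatched in Proposition \ref{5.3}, using that $\mathbb{D}$ exchanges Gorenstein projectives and Gorenstein injectives (Remark \ref{2.3}(4)) together with the $\Omega^{1}$-equivalence of Lemma \ref{3.3}. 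Consequently no genuinely new obstacle arises here; the proof is a bookkeeping composition of Theorem \ref{2.8} with Proposition \ref{5.3}.
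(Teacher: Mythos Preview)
Your proposal is correct and follows essentially the same approach as the paper: restrict the AIR bijection of Theorem \ref{2.8} and observe that the Gorenstein condition is preserved by definition. The paper's proof is in fact terser than yours---it only explicitly argues $(1)\Leftrightarrow(2)$ via the definitions and leaves the link to $(3)$ implicit---whereas you spell out the $(2)\Leftrightarrow(3)$ step by invoking Proposition \ref{5.3}(1), which is exactly the right ingredient.
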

\begin{proof} By Theorem \ref{2.8}, one gets a bijection between the set of support $\tau$-tilting modules and the set of functorially finite torsion pairs via $T\rightarrow \Fac T$. By the definition of Gorenstein projective support $\tau$-tilting modules and the definition of functorially finite Gorenstein torsion pairs, the map also works in our case.
\end{proof}

As a consequence, we have the following corollary.

\begin{corollary}\label{5.6} Let $\Lambda$ be an 1-Iwanaga-Gorenstein algebra, that is, $\id\Lambda_{\Lambda}=\id{_{\Lambda}\Lambda}\leq1$. The following are equivalent.
\begin{enumerate}[\rm(1)]
\item $\Lambda$ is self-injective.
\item All functorially finite torsion pairs are Gorenstein.
\item All support $\tau$-tilting modules are Gorenstein projective.
\item All support $\tau^{-1}$-tilting modules are Gorenstein injective.
\item All $\tau$-rigid modules are Gorenstein projective.
\item All $\tau^{-1}$-rigid modules are Gorenstein injective.
\end{enumerate}
\end{corollary}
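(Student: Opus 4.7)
The plan is to establish the equivalences by a chain of implications anchored at $(1)$. The implications $(1)\Rightarrow(5)$ and $(1)\Rightarrow(6)$ are immediate: for a self-injective algebra $\mathcal{P}(\Lambda)=\mathcal{I}(\Lambda)$, so every module in $\mod\Lambda$ is simultaneously Gorenstein projective and Gorenstein injective, whence every $\tau$-rigid (resp.\ $\tau^{-1}$-rigid) module is Gorenstein projective (resp.\ Gorenstein injective). The implications $(5)\Rightarrow(3)$ and $(6)\Rightarrow(4)$ are tautological, since a support $\tau$-tilting module is in particular $\tau$-rigid, and dually. The cluster $(2)\Leftrightarrow(3)\Leftrightarrow(4)$ is contained in Theorem~\ref{5.5}: under the bijection there, a support $\tau$-tilting module $M$, the torsion pair $(\Fac M,\Sub(\tau M\oplus\nu P))$ from Lemma~\ref{2.7}, and the corresponding support $\tau^{-1}$-tilting module $\tau M\oplus\nu P$ are Gorenstein simultaneously, so the universal statements on the three sides match.

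The crucial step is $(3)\Rightarrow(1)$, and this is where the 1-Iwanaga-Gorenstein hypothesis enters. The key observation is that $\mathbb{D}\Lambda$ is a tilting right $\Lambda$-module. Indeed, $\pd(\mathbb{D}\Lambda)_{\Lambda}=\id{_{\Lambda}\Lambda}\leq 1$ by hypothesis; the vanishing $\Ext^{1}_{\Lambda}(\mathbb{D}\Lambda,\mathbb{D}\Lambda)=0$ holds because $\mathbb{D}\Lambda$ is injective; and the other hypothesis $\id\Lambda_{\Lambda}\leq 1$ provides an injective coresolution $0\to\Lambda\to I^{0}\to I^{1}\to 0$ with $I^{0},I^{1}\in\add\mathbb{D}\Lambda$. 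In particular $\mathbb{D}\Lambda$ is a $\tau$-tilting $\Lambda$-module, and by $(3)$ it is Gorenstein projective. But any Gorenstein projective module of finite projective dimension must be projective: applying $\Hom_{\Lambda}(-,\Lambda)$ to a minimal projective resolution $0\to P_{n}\to\cdots\to P_{0}\to M\to 0$ and using $\Ext^{i}_{\Lambda}(M,\Lambda)=0$ for $i\geq 1$ makes the dualised top differential $P_{n-1}^{\ast}\to P_{n}^{\ast}$ surjective; as a surjection between finitely generated projective $\Lambda^{\mathrm{op}}$-modules it splits, so $P_{n}\to P_{n-1}$ splits too, contradicting the minimality of the resolution unless $n=0$. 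Thus $\mathbb{D}\Lambda$ is projective as a right $\Lambda$-module, which is equivalent to $\Lambda_{\Lambda}$ being injective, i.e.\ $\Lambda$ being self-injective.

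The remaining implication $(4)\Rightarrow(1)$ is closed by composing $(4)\Rightarrow(3)\Rightarrow(1)$ through Theorem~\ref{5.5}. The main technical ingredients are thus the tilting verification for $\mathbb{D}\Lambda$ under the 1-Iwanaga-Gorenstein hypothesis, together with the standard homological fact that $\mathcal{GP}(\Lambda)\cap\{M:\pd M<\infty\}=\mathcal{P}(\Lambda)$; once these are in hand the corollary unfolds, with Theorem~\ref{5.5} handling all the bookkeeping among $(2)$, $(3)$ and $(4)$ and the easy direction $(1)$ to the remaining conditions being free.
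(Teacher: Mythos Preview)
Your proof is correct and follows essentially the same approach as the paper: the crucial step $(3)\Rightarrow(1)$ is identical in both, using that $\mathbb{D}\Lambda$ is a tilting (hence $\tau$-tilting) module under the 1-Iwanaga-Gorenstein hypothesis, so by $(3)$ it is Gorenstein projective of finite projective dimension and therefore projective. The only cosmetic differences are that you verify the tilting conditions for $\mathbb{D}\Lambda$ and the fact $\mathcal{GP}(\Lambda)\cap\{M:\pd M<\infty\}=\mathcal{P}(\Lambda)$ explicitly (the paper cites \cite{XZZ} and Remark~\ref{2.3}), and that you route $(3)\Rightarrow(5)$ through $(1)$ whereas the paper uses directly that every $\tau$-rigid module is a summand of a support $\tau$-tilting module.
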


\begin{proof} $(1)\Rightarrow(2)$ is obvious and $(2)\Rightarrow(3)$ follows from Theorem \ref{5.5}.

We show $(3)\Rightarrow(1)$.
By \cite[Theorem 1.1]{XZZ}, $\mathbb{D}\Lambda$ is a $\tau$-tilting module (in fact a tilting module) in $\mod\Lambda$. Therefore $\mathbb{D}\Lambda$ is Gorenstein projective. Since $\Lambda$ is $1$-Iwanaga-Gorenstein, then $\pd \mathbb{D}\Lambda\leq 1$. By Remark \ref{2.3}, one gets that $\mathbb{D}\Lambda$ is projective, that is, $\Lambda$ is self-injective.

$(2)\Leftrightarrow(4)$ follows from Theorem \ref{5.5}. $(3)\Leftrightarrow(5)$ since every $\tau$-rigid module is a direct summand of a support $\tau$-tilting module. $(4)\Leftrightarrow(6)$ follows from the fact every $\tau^{-1}$-rigid module is a direct summand of a support $\tau^{-1}$-tilting module.
\end{proof}

\section{CM-$\tau$-tilting finite algebras}
In this section, we give the definition of CM-$\tau$-tilting finite algebras, show the basic properties and prove that algebras of radical square zero are CM-$\tau$-tilting finite.

Recall that a finite dimensional algebra $\Lambda$ is called {\it CM-finite} if it admits finite number of indecomposable Gorenstein projective modules. For more details on CM-finite algebras we refer to \cite{C1, LZ1}.
And $\Lambda$ is called {\it$\tau$-tilting finite} if there are only finitely many isomorphism classes of indecomposable $\tau$-rigid $\Lambda$-modules \cite{DIJ}. Now we introduce the definition of CM-$\tau$-tilting finite algebras.

\begin{definition}\label{4.1}
We call an algebra {\it CM-$\tau$-tilting finite} if it only has finitely many isomorphism classes of indecomposable Gorenstein projective $\tau$-rigid $\Lambda$-modules.
\end{definition}

\begin{remark}\label{4.2}
Both CM-finite algebras and $\tau$-tilting finite algebras are CM-$\tau$-tilting finite algebras.
\end{remark}

In the following we give an example to show that the definition of CM-$\tau$-tilting finite algebra is far from meaningless.

\begin{example}\label{4.5}
 \begin{enumerate}[\rm(1)]
\item Let $\Lambda$ be an algebra given by the quiver $Q$: $1\rightrightarrows2$. Then $\Lambda$ is $\tau$-tilting infinite but CM-$\tau$-tilting finite.
\item The preprojective algebras of Dynkin type $A_{n}$ with $n\geq 5$, is CM-infinite but CM-$\tau$-tilting finite.
\end{enumerate}
\end{example}

Now we show the basic properties of CM-$\tau$-tilting finite algebras. Following \cite{DIJ} we have the following theorem.

\begin{theorem}\label{4.a} Let $\Lambda$ be an algebra. Then the following are equivalent.
\begin{enumerate}[\rm (1)]
\item $\Lambda$ is CM-$\tau$-tilting finite.
\item There are finitely many indecomposable Gorenstein projective $\tau$-rigid modules in $\mod\Lambda$.
%\item There are finite Gorenstein projective $\tau$-tilting modules in $\mod\Lambda$.
\item $\Lambda^{\rm op}$ is CM-$\tau$-tilting finite.
\item There are finitely many indecomposable Gorenstein projective $\tau$-rigid modules in $\mod\Lambda^{\rm op}$.
%\item There are finite Gorenstein projective $\tau$-tilting modules in $\mod\Lambda^{\rm op}$.
\end{enumerate}
\end{theorem}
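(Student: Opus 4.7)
The equivalences $(1)\Leftrightarrow(2)$ and $(3)\Leftrightarrow(4)$ hold tautologically from Definition \ref{4.1}, so my plan is to reduce everything to establishing $(2)\Leftrightarrow(4)$. For this I would exhibit an explicit bijection between isomorphism classes of indecomposable Gorenstein projective $\tau$-rigid $\Lambda$-modules and indecomposable Gorenstein projective $\tau$-rigid $\Lambda^{\rm op}$-modules, and then finiteness of either side transfers to the other.

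The construction I would use is the one already provided by Theorem \ref{3.4} and Lemma \ref{3.2}, namely $(-)^{\dagger}$. To translate it from the level of pairs to the level of indecomposable modules, I would split an indecomposable Gorenstein projective $\tau$-rigid $M$ into two cases. If $M$ is projective, assign to it the indecomposable projective $\Lambda^{\rm op}$-module $M^{\ast}$; this is well known to give a bijection between indecomposable projectives on the two sides, and every indecomposable projective is trivially a Gorenstein projective $\tau$-rigid module. If $M$ is non-projective, then the pair $(M,0)$ has $M=M_{\rm np}$, so $(M,0)^{\dagger}=(\Tr M,0)$, and I would assign $M\mapsto \Tr M$. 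By Lemma \ref{3.3}(3), $\Tr$ restricts to a duality $\underline{\mathcal{GP}(\Lambda)}\to\underline{\mathcal{GP}(\Lambda^{\rm op})}$, so $\Tr M$ is again a non-projective indecomposable Gorenstein projective module; by Lemma \ref{3.2}(1), its $\tau$-rigidity is equivalent to that of $M$; and by Theorem \ref{3.4}, the Gorenstein projectivity is preserved. The identity $(-)^{\dagger\dagger}=\id$ gives the inverse.

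Once this bijection is in place, the count of indecomposable Gorenstein projective $\tau$-rigid modules on the $\Lambda$ side equals $|\Lambda|$ plus the number of non-projective such modules, which equals the analogous quantity on the $\Lambda^{\rm op}$ side; in particular one set is finite if and only if the other is, which is exactly $(2)\Leftrightarrow(4)$.

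The only mildly delicate point, which I would expect to be the main obstacle (though a very mild one), is to confirm that the pair-level bijection of Lemma \ref{3.2} and Theorem \ref{3.4} really does send indecomposable module summands to indecomposable module summands rather than redistributing them, and in particular that the projective and non-projective parts are not mixed. This is ensured by the explicit formula $(M,P)^{\dagger}=(\Tr M_{\rm np}\oplus P^{\ast},M_{\rm p}^{\ast})$ together with the fact that $\Tr$ is an equivalence of the stable Gorenstein projective categories, so it induces a bijection on non-projective indecomposables, while $(-)^{\ast}$ handles projective indecomposables separately.
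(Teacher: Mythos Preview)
Your proposal is correct and follows essentially the same approach as the paper: the paper's proof reduces to $(2)\Leftrightarrow(4)$ by definition and then simply invokes Theorem \ref{3.4}. Your version is more detailed in that you explicitly translate the pair-level bijection of Theorem \ref{3.4} into a bijection of indecomposable Gorenstein projective $\tau$-rigid modules by treating the projective and non-projective cases separately, a step the paper leaves implicit.
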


\begin{proof} Clearly, $(1)\Leftrightarrow (2)$ and $(3)\Leftrightarrow (4)$ follow from the definition. Now it suffices to show $(2)\Leftrightarrow (4)$. $(2)\Leftrightarrow (4)$ follows from Theorem \ref{3.4}.
\end{proof}

Although algebras of radical square zero are not necessary to be $\tau$-tilting finite, we show that they are CM-$\tau$-tilting finite.

\begin{theorem}\label{4.4}
Algebras of radical square zero are indeed CM-finite, and hence CM-$\tau$-tilting finite.
\end{theorem}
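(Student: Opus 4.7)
The plan is to prove the stronger statement that every finite dimensional algebra $\Lambda$ with $\operatorname{rad}^{2}\Lambda=0$ is CM-finite; the ``hence CM-$\tau$-tilting finite'' half then follows from Remark \ref{4.2}. Since there are only finitely many indecomposable projective $\Lambda$-modules, it suffices to show that every indecomposable Gorenstein projective module without a projective direct summand is simple.

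The first step is to reduce to a statement about minimal first syzygies. Given such an indecomposable $M\in\mathcal{GP}(\Lambda)$, Lemma \ref{3.3}(2) lets me choose $N\in\mathcal{GP}(\Lambda)$ with $M\cong\Omega^{1}N$ in $\underline{\mathcal{GP}(\Lambda)}$, where $N$ can be taken to have no projective direct summand. A minimal projective cover $P\twoheadrightarrow N$ then yields a short exact sequence $0\to\Omega N\to P\to N\to 0$ with $\Omega N\subseteq\operatorname{rad}P$ by minimality. Since $\operatorname{rad}^{2}\Lambda=0$, the module $\operatorname{rad}P$ is semisimple, and hence so is its submodule $\Omega N$ together with each of its direct summands.

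The second step is to transport this conclusion from $\Omega N$ back to $M$. The two modules $M$ and $\Omega N$ represent the same class in $\underline{\mathcal{GP}(\Lambda)}$, so by Krull--Schmidt they differ only by projective direct summands; writing $\Omega N=(\Omega N)_{\mathrm{np}}\oplus(\Omega N)_{\mathrm{p}}$ in the paper's convention, we get $M\cong(\Omega N)_{\mathrm{np}}$. This module is semisimple as a direct summand of the semisimple module $\Omega N$, and being indecomposable, $M$ is simple.

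Combining the finitely many indecomposable projectives with the finitely many simple modules, there are only finitely many isomorphism classes of indecomposable Gorenstein projective $\Lambda$-modules, so $\Lambda$ is CM-finite, and hence CM-$\tau$-tilting finite by Remark \ref{4.2}. The delicate point is arranging an embedding of $M$ into the \emph{radical} of a projective; this is why we invoke the equivalence $\Omega^{1}\colon \underline{\mathcal{GP}(\Lambda)}\to\underline{\mathcal{GP}(\Lambda)}$ from Lemma \ref{3.3}(2) together with minimality of the projective cover, rather than appealing directly to the embedding of $M$ in the complete projective resolution $\cdots\to P_{-1}\to P_{0}\to\cdots$, which need not land in $\operatorname{rad}P_{0}$.
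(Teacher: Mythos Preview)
Your argument is correct and takes a genuinely different route from the paper's. The paper invokes Chen's dichotomy \cite{C2}---a radical-square-zero algebra is either self-injective or CM-free---and then, in the self-injective case, shows that each indecomposable projective has simple radical (and hence simple socle), so $\Lambda$ is Nakayama and therefore representation-finite. Your proof bypasses the citation to \cite{C2} entirely: using only the equivalence $\Omega^{1}$ on $\underline{\mathcal{GP}(\Lambda)}$ from Lemma~\ref{3.3}(2), you show directly that every non-projective indecomposable Gorenstein projective module is simple. This is more self-contained and yields a sharper conclusion about the Gorenstein projective modules themselves (they are exactly among the projectives and simples), whereas the paper's argument gives structural information about the algebra (it is Nakayama when self-injective) but reaches CM-finiteness only through the implicit use of representation-finiteness of Nakayama algebras. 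Your final paragraph, explaining why one must pass through $\Omega^{1}$ and a \emph{minimal} projective cover rather than use the raw embedding $M\hookrightarrow P_{0}$ from a complete resolution, is a nice clarification of the one non-obvious maneuver.
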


\begin{proof}
Let $\Lambda$ be an algebra with radical square zero. Then by \cite{C2} $\Lambda$ is either self-injective or CM-free, that is, all Gorenstein projective modules in $\mod\Lambda$ are projective. We only need to show that $\Lambda$ is a Nakayama algebra if it is self-injective.

For any indecomposable projective $P \in \mod \Lambda$, there is a simple module $S$ such that $P_{0}(S)\simeq P$. Then we get the following exact sequence $0\rightarrow \Omega^{1}(S) \rightarrow P_{0}(S) \rightarrow S \rightarrow 0$. Since $\Lambda$ is of radical square zero, then $\Omega^{1}(S) \simeq \rad P_{0}(S)$ is semi-simple. On the other hand, the fact $\Lambda$ is self-injective implies that $P_{0}(S)$ is indecomposable injective. Then $\soc P_{0}(S)$ is simple, and hence $\Omega^{1}(S) \subset \soc P_{0}(S)$ is simple. So $P$ admits a unique composition series.
\end{proof}

Recall that a quotient algebra of a $\tau$-tilting finite algebra is again $\tau$-tilting finite. It is natural to
consider the quotient algebras of CM-$\tau$-tilting algebras. However, we have the following example in \cite{IZ2} to show that a quotient algebra of CM-finite algebra need not be CM-finite.

\begin{example}\label{4.6} Let $\Lambda$ be the Auslander algebra of $K[x]/(x^n)$ with $n\geq 6$. Then
\begin{enumerate}[\rm (1)]
\item The Auslander algebra $\Lambda$ is presented by the quiver
\[\xymatrix{
1\ar@<2pt>[r]^{a_1}&2\ar@<2pt>[r]^{a_2}\ar@<2pt>[l]^{b_2}&3\ar@<2pt>[r]^{a_3}\ar@<2pt>[l]^{b_3}&\cdots\ar@<2pt>[r]^{a_{n-2}}\ar@<2pt>[l]^{b_4}&n-1\ar@<2pt>[r]^{a_{n-1}}\ar@<2pt>[l]^{b_{n-1}}&n\ar@<2pt>[l]^{b_n}
}\] with relations $a_{1} b_{2}= 0$ and $a_{i} b_{i+1} =b_{i}
a_{i-1}$ for any $2 \leq i \leq n-1$.
\item Let $\Gamma=\Lambda/(e_n)$, then $\Gamma$ is a preprojective algebra of $A_{n-1}$ of infinite representation type.
\item $\Lambda$ is CM-finite but $\Gamma$ is not CM-finite.
\end{enumerate}
\end{example}

We end this paper with the following question.
\begin{question}\label{4.7} Let $\Lambda$ be a CM-$\tau$-tilting finite algebra and $\Gamma$ be a quotient algebra of $\Lambda$. Is $\Gamma$ CM-$\tau$-tilting finite?
\end{question}

We remark that it is shown in \cite[Theorem 3.13]{LZ2} that $T_n(A)$ is $CM$-$\tau$-tilting finite implies that $A$ is $CM$-$\tau$-tilting finite. In the following we can give the following theorem which generalizes \cite[Theorem 3.13]{LZ2}.

\begin{theorem} Let $\Lambda$ and $\Gamma$ be two algebras. If $\Lambda\otimes_K \Gamma$ is a $CM$-$\tau$-tilting finite, then both $\Lambda$ and $\Gamma$ are $CM$-$\tau$-tilting finite.
\end{theorem}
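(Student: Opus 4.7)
The plan is to construct an exact functor $F \colon \mod \Lambda \to \mod(\Lambda \otimes_K \Gamma)$ by $F(M) = M \otimes_K \Gamma$, with right $\Lambda \otimes_K \Gamma$-action $(m \otimes g)(\lambda \otimes g') = m\lambda \otimes gg'$, show that $F$ sends Gorenstein projective $\tau$-rigid modules to Gorenstein projective $\tau$-rigid modules, and then transfer CM-$\tau$-tilting finiteness from $\Lambda \otimes_K \Gamma$ back down to $\Lambda$ via restriction and a Krull--Schmidt argument. The statement for $\Gamma$ will follow at once by the symmetric roles of $\Lambda$ and $\Gamma$. Since $\Lambda \otimes_K \Gamma$ is free of rank $\dim_K \Gamma$ as a left $\Lambda$-module, $F$ is exact and sends f.g.\ projective $\Lambda$-modules to f.g.\ projective $\Lambda \otimes_K \Gamma$-modules.

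For Gorenstein projectivity, I would take a complete projective resolution $\mathbf{P}$ of $M \in \mathcal{GP}(\Lambda)$, so that $F(\mathbf{P})$ is an exact sequence of projective $\Lambda \otimes_K \Gamma$-modules with $F(M)$ one of its images. The natural isomorphism
\[
\Hom_{\Lambda \otimes_K \Gamma}\bigl(F(P),\ \Lambda \otimes_K \Gamma\bigr) \cong P^{\ast} \otimes_K \Gamma,
\]
valid for f.g.\ projective $\Lambda$-modules $P$ (checked on $P = \Lambda$ and extended by additivity), identifies $\Hom_{\Lambda \otimes_K \Gamma}(F(\mathbf{P}), \Lambda \otimes_K \Gamma)$ with $\mathbf{P}^{\ast} \otimes_K \Gamma$. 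The latter is exact because $\mathbf{P}^{\ast}$ is exact and $-\otimes_K \Gamma$ is exact over $K$, so $F(\mathbf{P})$ is a complete projective resolution of $F(M)$ and hence $F(M) \in \mathcal{GP}(\Lambda \otimes_K \Gamma)$.

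For $\tau$-rigidity, I would use the criterion that $M$ is $\tau$-rigid if and only if, for a (not necessarily minimal) projective presentation $P_1 \to P_0 \to M \to 0$, the induced map $\Hom_\Lambda(P_0, M) \to \Hom_\Lambda(P_1, M)$ is surjective. Applying $F$ yields a projective presentation $F(P_1) \to F(P_0) \to F(M) \to 0$ over $\Lambda \otimes_K \Gamma$, and the natural isomorphism
\[
\Hom_{\Lambda \otimes_K \Gamma}\bigl(F(P),\ F(M)\bigr) \cong \Hom_\Lambda(P, M) \otimes_K \Gamma
\]
identifies the induced map with the original map tensored over $K$ with $\operatorname{id}_\Gamma$, which stays surjective because $-\otimes_K \Gamma$ is exact. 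This is the step where the bulk of the technical work lies, since it requires both the $\Hom$-computation above and the characterization of $\tau$-rigidity via $\Hom$-surjectivity.

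Finally, suppose for contradiction that $\Lambda$ is not CM-$\tau$-tilting finite, so that there exist pairwise non-isomorphic indecomposable Gorenstein projective $\tau$-rigid $\Lambda$-modules $M_1, M_2, \ldots$. By the previous steps, each $F(M_i)$ is Gorenstein projective $\tau$-rigid over $\Lambda \otimes_K \Gamma$ and, by Krull--Schmidt, decomposes into indecomposables which inherit both properties as direct summands. If $\Lambda \otimes_K \Gamma$ were CM-$\tau$-tilting finite, only finitely many iso classes $N_1, \ldots, N_r$ of such indecomposables could appear. Letting $R$ denote restriction along $\Lambda \to \Lambda \otimes_K \Gamma$, $\lambda \mapsto \lambda \otimes 1$, one has $R(F(M_i)) \cong M_i^{\oplus \dim_K \Gamma}$, which is a direct sum of copies of the finitely many indecomposable $\Lambda$-summands of $R(N_1), \ldots, R(N_r)$; Krull--Schmidt over $\Lambda$ then forces each $M_i$ to be one of those finitely many indecomposables, a contradiction. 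The main obstacle is the $\tau$-rigidity step; once the two Hom-identifications above are in place, the remaining arguments reduce to routine homological and Krull--Schmidt bookkeeping.
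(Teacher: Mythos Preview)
Your argument is correct, and the overall strategy---push Gorenstein projective $\tau$-rigid $\Lambda$-modules up to $\Lambda\otimes_K\Gamma$ via a tensor product and derive a contradiction---is the same as the paper's. The execution differs in one substantive way. The paper tensors each $M_i$ with a fixed \emph{indecomposable} projective $P\in\mod\Gamma$ and invokes \cite[Proposition~3.1]{LZ2} to conclude that $M_i\otimes_K P$ is itself indecomposable, \cite[Proposition~3.5]{LZ2} for $\tau$-rigidity, and \cite[Proposition~2.6]{HuLXZ} for Gorenstein projectivity; this immediately yields infinitely many pairwise non-isomorphic indecomposable Gorenstein projective $\tau$-rigid $\Lambda\otimes_K\Gamma$-modules. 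You instead tensor with all of $\Gamma$, reprove the preservation of Gorenstein projectivity and $\tau$-rigidity by hand, and then bypass the indecomposability question entirely with the restriction-plus-Krull--Schmidt argument. Your route is more self-contained and sidesteps any hypotheses on $K$ that an indecomposability statement for tensor products might require; the paper's route is shorter once the cited lemmas are granted. One small remark: the biconditional you state for $\tau$-rigidity in terms of surjectivity of $\Hom_\Lambda(P_0,M)\to\Hom_\Lambda(P_1,M)$ only holds as stated for a \emph{minimal} presentation, but your proof only needs the ``if'' direction for the (possibly non-minimal) presentation $F(P_1)\to F(P_0)\to F(M)\to 0$, and that direction is valid for any presentation, so the argument goes through.
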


\begin{proof} We only show the case for $\Lambda$. On the contrary, suppose that $\Lambda$ admits infinite number of non-isomorphic Gorenstein projective $\tau$-rigid modules $M_1, M_2,\cdots, M_n,\cdots$. Let $P\in\mod\Gamma$ be an indecomposable projective module. Then by \cite[Proposition 3.5]{LZ2}, one gets that $M_i\otimes_K P$ is a $\tau$-rigid module in $\mod(\Lambda\otimes_K\Gamma)$ for $n\geq 1$. By using \cite[Proposition 2.6]{HuLXZ}, $M_i\otimes_K P$ is a Gorenstein projective module in $\mod(\Lambda\otimes_K\Gamma)$ for $n\geq 1$. Finally, one gets that $M_i\otimes_K P$ is indecomposable by \cite[Proposition 3.1]{LZ2}. Then one gets infinite number of non-isomorphic indecomposable Gorenstein projective $\tau$-rigid modules: $M_1\otimes_K P, M_2\otimes_K P, M_3\otimes_K P, \cdots$. This is a contradiction since $\Lambda\otimes_K \Gamma$ is $CM$-$\tau$-tilting finite.
\end{proof}

\section{The Bongartz completion of a Gorenstein projective $\tau$-rigid module}

In this section, we give a counterexample to Question 1.5 and show that a Gorenstein projective $\tau$-rigid module need not be a direct summand of a Gorenstein projective $\tau$-tilting module.

\begin{example}\label{6.1} Let $\Lambda$ be an algebra give by the quiver $Q:$
$$\begin{xy}
 (0,0)*+{\begin{smallmatrix}1 \end{smallmatrix}}="1",
(20,20)*+{\begin{smallmatrix}2\end{smallmatrix}}="2",
(20,-20)*+{\begin{smallmatrix}3\end{smallmatrix}}="3",
(40,0)*+{\begin{smallmatrix}4\end{smallmatrix}}="4",

\ar^{\alpha}"1";"2", \ar^{\beta}"2";"4", \ar^{\gamma}"1";"3", \ar^{\eta}"3";"4", \ar^{\pi}"4";"1",
\end{xy}$$
with the relations $\beta\alpha=\eta\gamma$, $\pi\beta=\alpha\pi=0$ and $\pi\eta=\gamma\pi=0$. Then

(1) $\Lambda$ is an $1$-Gorenstein algebra, that is, $\id_{\Lambda} \Lambda=1$.

(2) $S(1)=1$ is a Gorenstein projective $\tau$-rigid module and $^{\bot}\tau S(1)=\add$
$\begin{smallmatrix}1\\ 3 \end{smallmatrix}\oplus
\begin{smallmatrix}1\\2\end{smallmatrix}\oplus
\begin{smallmatrix}4\\1\end{smallmatrix}\oplus
\begin{smallmatrix}4\end{smallmatrix}\oplus
\begin{smallmatrix}1\end{smallmatrix}$.

(3) $\begin{smallmatrix}1\\ 3 \end{smallmatrix}$ and $\begin{smallmatrix}1\\2\end{smallmatrix}$ are indecomposable injective modules and hence of finite projective dimension. But they are not Gorenstein projective. Otherwise, $\begin{smallmatrix}1\\ 3 \end{smallmatrix}$ and $\begin{smallmatrix}1\\2\end{smallmatrix}$ are projective, a contradiction.

(4) The Bongartz completion of $S(1)$ is not a Gorenstein projective $\tau$-tilting module.
\end{example}

In the following we show that there are algebras such that the Bongartz completions of every Gorenstein projective $\tau$-rigid modules are Gorenstein projective $\tau$-tilting modules.

\begin{lemma}\label{6.2} Let $\Lambda$ be an algebra. If $\Lambda$ is either self-injective or $CM$-free, then every Gorenstein projective $\tau$-rigid module is a direct summand of a Gorenstein projective $\tau$-tilting module.
\end{lemma}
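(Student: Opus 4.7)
The plan is to split into the two cases in the hypothesis and treat each by a short argument that pins down what the Bongartz completion (or a simpler choice) looks like.

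First, suppose $\Lambda$ is self-injective. Then by Example~\ref{3.b}(3)--(4) every module in $\mod\Lambda$ is Gorenstein projective, so the assertion reduces to the classical Bongartz completion statement of Adachi--Iyama--Reiten: for any $\tau$-rigid $M$, its Bongartz completion $T$ is a $\tau$-tilting module containing $M$ as a direct summand. Since every module over a self-injective algebra is Gorenstein projective, $T$ is automatically a Gorenstein projective $\tau$-tilting module, and $M\in\add T$.

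Next, suppose $\Lambda$ is CM-free, meaning that $\mathcal{GP}(\Lambda)=\mathcal{P}(\Lambda)$. Then any Gorenstein projective $\tau$-rigid module $M$ is projective, so $M\in\add\Lambda$. But $\Lambda$ itself is a (Gorenstein) projective $\tau$-tilting module: it is projective, hence Gorenstein projective, and it is a tilting, in particular $\tau$-tilting, module with $|\Lambda|=|\Lambda|$. Thus $M$ is a direct summand of the Gorenstein projective $\tau$-tilting module $\Lambda$.

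Combining the two cases yields the lemma. There is no real obstacle here: the only subtle point is remembering in the self-injective case that the Bongartz completion lives entirely inside the category $\mathcal{GP}(\Lambda)=\mod\Lambda$, and in the CM-free case that the Gorenstein projective hypothesis forces $M$ to be projective so that $\Lambda$ itself serves as the required completion.
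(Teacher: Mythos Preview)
Your proof is correct and follows exactly the same case split as the paper's own proof: in the self-injective case every module is Gorenstein projective so the usual Bongartz completion works, and in the CM-free case a Gorenstein projective $\tau$-rigid module is projective and hence lies in $\add\Lambda$. The only difference is that you spell out the choice of $\tau$-tilting module in each case, whereas the paper simply writes ``the assertion holds''.
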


\begin{proof} If $\Lambda$ is self-injective, then every module in $\mod\Lambda$ is Gorenstein projective. The assertion holds. If $\Lambda$ is $CM$-free, then every Gorenstein projective $\tau$-rigid module is projective. The assertion holds.
\end{proof}

Now we are in the position to show the main result in this section.

\begin{theorem}\label{6.3} Let $\Lambda$ be an algebra with radical square zero. Then the Bongartz completion of a Gorenstein projective $\tau$-rigid module is a Gorenstein projective $\tau$-tilting module. And hence every Gorenstein projective $\tau$-rigid module is a direct summand of a Gorenstein projective $\tau$-tilting module.
\end{theorem}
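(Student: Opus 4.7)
The plan is to invoke a dichotomy of Chen \cite{C2} already used in the proof of Theorem \ref{4.4}: any finite dimensional algebra $\Lambda$ with radical square zero is either self-injective or $CM$-free. In each branch I will identify the Bongartz completion of a Gorenstein projective $\tau$-rigid module $M$ explicitly and then verify it is Gorenstein projective.

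In the self-injective case, every module in $\mod\Lambda$ is Gorenstein projective: a minimal injective coresolution of any $N$ is also a projective coresolution, and splicing it with a projective resolution of $N$ yields a doubly infinite exact complex of projectives whose image at degree $0$ is $N$; $\Hom_{\Lambda}(-,\Lambda)$-exactness is automatic since $\Lambda$ is injective. Consequently the Bongartz completion $T$ of $M$, which is a $\tau$-tilting module by \cite{AIR}, is automatically Gorenstein projective.

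In the $CM$-free case, every Gorenstein projective module is projective, so the hypothesis forces $M$ to be projective and in particular $\tau M=0$. Hence $^{\bot}\tau M=\mod\Lambda$, whose unique basic Ext-projective generator is $\Lambda$. Thus the Bongartz completion of $M$ equals $\Lambda$, which is trivially a Gorenstein projective $\tau$-tilting module.

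Finally, since a $\tau$-rigid module is always a direct summand of its Bongartz completion \cite{AIR}, the ``hence'' statement follows in both cases. I expect no serious obstacle: the only non-formal input is Chen's dichotomy for $\rad^2\Lambda=0$ algebras, which the paper already invokes in Theorem \ref{4.4}; once that is in place, everything reduces to the two easy sub-cases above, with the mild verification being the recognition that, for projective $M$, the Bongartz completion collapses to $\Lambda$ itself.
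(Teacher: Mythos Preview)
Your proof is correct and follows the same strategy as the paper: invoke Chen's dichotomy \cite{C2} (already used in Theorem \ref{4.4}) to split into the self-injective and CM-free cases, then handle each case directly. In fact your write-up is slightly more explicit than the paper's, which simply cites Lemma \ref{6.2} after the dichotomy; your observation that in the CM-free branch the Bongartz completion collapses to $\Lambda$ fills in the one step the paper leaves implicit.
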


\begin{proof} By \cite{C2}, $\Lambda$ is either self-injective or $CM$-free. Then by Lemma \ref{6.2}, the assertion holds.
\end{proof}

We should remark that there does exist an algebra which is neither self-injective nor $CM$-free such that every Gorenstein projective $\tau$-rigid module is a direct summand of a Gorenstein projective $\tau$-tilting module.

\noindent{\bf Competing interests:} The authors declare none.

%\noindent{\bf Data availability} Data sharing not applicable to this article as no datasets were generated or analysed during the current study.


\begin{thebibliography}{101}

 \bibitem[AIR]{AIR} T. Adachi,  O. Iyama and I. Reiten, $\tau$-tilting theory, Compos. Math., 150(3)(2014), 415-452.
 %\bibitem[AAC]{AAC} T. Adachi, T. Aihara and A. Chan, {\it Classification of two-term tilting complexes over Brauer graph algebras,} Math. Z., 290(2)(2018),1-36.
 \bibitem[AiI]{AiI} T. Aihara and O. Iyama, Silting mutation in triangulated categories, J. Lond. Math. Soc., 85(3)(2012), 633-668.
 \bibitem [AiH]{AiH} T. Aihara and T. Honma, $\tau$-tilting finite triangular matrix algebras. J. Pure Appl. Algebra 225(2)(2021), 106785.
 \bibitem[AuB]{AuB} M. Auslander and M. Bridger, Stable module theory, Memoirs of the American Mathematical Society, 1969.
 \bibitem[AuR]{AuR} M. Auslander and I. Reiten, Cohen-Macaulay and Gorenstein Artin algebras, Representation Theory of Finite Groups and Finite-Dimensional Algebras, Birkh\"{a}user Basel, 95(1991), 221-245.

   %\bibitem[AuS]{AuS} M. Auslander and S. O. Smol{\o}, Almost split sequences in subcategories, J. Algebra, 69(1981), 426-454, Addendum; J. Algebra, 71(1981), 592-594.
 \bibitem[C1]{C1} X.W. Chen, An Auslander-type result for gorenstein-projective modules, Adv. Math.,
218 (2008), 2043-2050.
 \bibitem[C2]{C2} X.W. Chen, Algebras with radical square zero are either self-injective or CM-free, Proc. Amer. Math. Soc., 140(1)(2011), 93-98.
 \bibitem[CSZ]{CSZ}  X. W. Chen, D. W. Shen,and G. D. Zhou, The Gorenstein-projective modules over a monomial algebra. Proc. Roy. Soc. Edinburgh Sect. A 148(6)(2018), 1115-1134.
 \bibitem[DIJ]{DIJ} L. Demonet, O. Iyama and G. Jasso, $\tau$-tilting finite algebras, bricks and $g$-vectors, Int. Math. Res. Not. 3(2019), 852-892.
% \bibitem[DIRRT]{DIRRT} L. Demonet, O. Iyama, N. Reading, I. Reiten and H. Thomas, Lattice theory of torsion classes, 2017(arXiv:1711.01785v1).
%\bibitem[EJT]{EJT} F. Eisele,¡¤G. Janssens and T. Raedschelders, A reducttion theorem for $\tau$-rigid modules, Math. Z.,290(2018), 1377-1413
\bibitem[EJ1]{EJ1}  E.E. Enochs and O.M.G. Jenda, {\it Gorenstein injective and projective modules}, Math. Z., 220(1995), 611-633.
\bibitem[EJ2]{EJ2} E.E. Enochs and O.M.G. Jenda,  Relative Homological Algebra. de Gruyter Exp.Math.,
vol. 30, Walter de Gruyter Co., 2000.
 %\bibitem[Hol]{Hol} H. Holm, Gorenstein homological dimensions, J. Pure Appl. Algebra, 189(2004), 167-193.
 \bibitem[HuLXZ]{HuLXZ} W. Hu, X.H. Luo, B.L. Xiong and G.D. Zhou, Gorenstein projective bimodules via monomorphism categories and filtration categories. J. Pure Appl. Algebra 223(3)(2019),1014-1039.
 \bibitem[IY]{IY} O. Iyama and Y. Yoshino, Mutation in triangulated categories and rigid Cohen-Macaulay modules, Invent. Math., 172(2008), 117-168.
\bibitem[IZ1]{IZ1} O. Iyama and X.J. Zhang, Tilting modules over Auslander-Gorenstein algebras, Pacific J. Math. 298(2)(2019), 399-416.
 \bibitem[IZ2]{IZ2} O. Iyama and X.J. Zhang, Classifying $\tau$-tilting modules over the Auslander algebra of
of $K[x]/(x^n)$, J. Math. Soc. Japan, 72(3)(2020), 731-764.
\bibitem[KK]{KK} R. Koshio, and Y. Kozakai, On support $\tau$-tilting modules over blocks covering cyclic blocks. J. Algebra 580(2021), 84-103.
\bibitem[K]{K}  S. Kvamme, Gorenstein projective objects in functor categories. Nagoya Math. J. 240(2020), 1-41.
\bibitem[LZ1]{LZ1} Z.W. Li and P. Zhang, Gorenstein algebras of finite Cohen-Macaulay type, Adv. Math., 218(2010), 728-734.
\bibitem[LZ2]{LZ2} Z.W. Li and X.J. Zhang, A construction of Gorenstein projective $\tau$-tilting modules, to appear in Colloq. Math., DOI: 10.4064/cm8682-1-2022.
 \bibitem[LiuZ]{LiuZ} Y. Z. Liu and Y.F. Zhang, The Gorenstein projective $\tau$-tilting modules over gentle algebras, in preparation.
 %\bibitem[M]{M} R. Marczinzik, On Gorenstein projective modules that are not maximal Cohen-Macaulay modules, 2017(arXiv:1709.01132v3).
% \bibitem[Mi]{Mi} Y. Mizuno,  Classifying $\tau$-tilting modules over preprojective algebras
%of Dynkin type, Math. Zeit., {277}(3) (2014), 665-690.
\bibitem[S]{S} P. Suarez, On the global dimension of the endomorphism algebra of a $\tau$-tilting module. J. Pure Appl. Algebra 225(11)(2021), 106740.
\bibitem[S]{S} S. O. Smal{\o}, Torsion theory and tilting modules, Bull. Lond. Math. Soc., 16(1984), 518-522.
\bibitem[RZ1]{RZ1} C.M, Ringel and P. Zhang, Gorenstein projective modules and Semi-Gorenstein projective modules, Algebra Number Theory,14(2020), 1-36.
 \bibitem[RZ2]{RZ2} C.M, Ringel and P. Zhang, Gorenstein projective modules and Semi-Gorenstein projective modules II, J Pure Appl. Algebra,
 224(2020), 106248.
 \bibitem[RZ3]{RZ3} C.M. Ringel and P. Zhang, On modules $M$ such that both $M$ and $M^*$ are semi-Gorenstein-projective. Algebr. Represent. Theory 24(4) (2021), 1125-1140.
 %\bibitem[W]{W} J. Wei, $\tau$-tilting theory and $*$-modules, J. Algebra, 414(2014), 1-5.
 \bibitem[XZZ]{XZZ} Z.Z. Xie, L.B. Zan and X.J. Zhang, Three results for $\tau$-rigid modules, Rocky Mountain J. Math., 49(8)(2019), 2791-2808.
 %\bibitem[Z]{Z} X.J. Zhang, $\tau$-rigid modules over Auslander algebras, Taiwanese J. Math., 21(4)(2017), 327-338.
\bibitem[Zi]{Zi} S. Zito, $\tau$-tilting finite cluster-tilted algebras, Proc. Edinb. Math. Soc., 63(4)(2020), 950-955.
\end{thebibliography}
\end{document}